\numberwithin{equation}{section}
  \theoremstyle{plain}
 \newtheorem{theorem}[equation]{Theorem}
 \newtheorem{lemma}[equation]{Lemma}
 \newtheorem{corollary}[equation]{Corollary}
 \theoremstyle{remark}
 \newtheorem{remark}[equation]{Remark}
 \theoremstyle{remark}
 \newtheorem{question}[equation]{Question}
\theoremstyle{definition}
 \newtheorem{definition}[equation]{Definition}
\newcommand{\abs}[1]{\left\lvert#1\right\rvert}
\newcommand{\Vol}{{\rm Vol}}
\newcommand{\R}{\mathbb{R}}
\newcommand{\dR}{\mathbb{R}}
\newcommand{\cB}{\mathcal{B}}
\newcommand{\cS}{\mathcal{S}}
\newcommand{\cW}{\mathcal{W}}
\newcommand{\D}{\nabla}
\newcommand{\wto}{\rightharpoonup}
\newcommand{\loc}{\textrm{loc}}
\newcommand{\Lap}{\Delta}
\newcommand{\eps}{\varepsilon}
\begin{document}

\title{Quantitative Stratification and the Regularity of Harmonic Map Flow}

\author{Jeff Cheeger, Robert Haslhofer and Aaron Naber\thanks{J.C. was partially supported by NSF Grant DMS 1005552 and by a Simons Fellowship.}}
%%\date{\today}
\date{}
\maketitle

\begin{abstract}x
In this paper, we prove estimates and quantitative regularity results for the harmonic map flow.
First, we consider $H^1_{\loc}$-maps $u$ defined on a parabolic ball $P\subset M^m\times \R$ and with target manifold $N$, that have bounded Dirichlet-energy and Struwe-energy.  We define a {\it quantitative} stratification, which groups together points in the domain into quantitative weakly singular strata $\cS^j_{\eta,r}(u)$ according to the number of approximate symmetries of $u$ at certain scales,
and prove that their tubular neighborhoods have small volume, namely $\Vol\left(T_r(\cS^j_{\eta,r}(u))\right)\leq Cr^{m+2-j-\eps}$. %This is a structural result for $H^1_{\loc}$-maps with the imposed energy bounds, and holds even without assuming that $u$ is a weak solution of the harmonic map flow.
In particular, this generalizes the known Hausdorff estimate $\dim \cS^j(u)\leq j$ for the weakly singular strata of suitable weak solutions of the harmonic map flow.
As an application, specializing to Chen-Struwe solutions with target manifolds that do not admit certain harmonic and quasi-harmonic spheres, we obtain refined Minkowski estimates for the singular set.
This generalizes a result of Lin-Wang \cite{LW1}. We also obtain $L^p$-estimates for the reciprocal of the regularity scale.
The results are analogous to our results for mean curvature flow that we recently proved in \cite{CheegerHaslhoferNaber_MCF}.
\end{abstract}

%%% ----------------------------------------------------------------------

\section{Introduction and main results}

In this paper, we prove estimates and quantitative regularity results for the harmonic map flow. The results are analogous to our results for mean curvature flow that we recently proved in \cite{CheegerHaslhoferNaber_MCF}.

Recalling the standard setting for the harmonic map flow \cite{Eells_Sampson}, let $M^m$ and $N^n$ be closed Riemannian manifolds and $u_0:M\to N$ be a smooth map; we can assume $N\subset\dR^d$, and we write $A$ for the second fundamental form.
Chen-Struwe \cite{ChenStruwe_existence} proved the existence of weak solutions $u:M\times \R_+\to N$ of the harmonic map flow starting at $u_0$,
\begin{equation}\label{eqn_hmf}
 \partial_t u = \Lap u+A(u)(\D u,\D u)\, ,\qquad\qquad u|_{t=0}=u_0.
\end{equation}
Having a general existence theory, a main question is then to study the regularity of these weak solutions.

Adapting Struwe's monotonicity formula \cite{Struwe_monotonicity} to the setting of their existence proof, Chen-Struwe proved that the parabolic Hausdorff dimension of the singular set $\mathcal{S}\subset M\times \R_+$ is at most $m$, c.f. \cite[Thm. 1.5]{ChenStruwe_existence}.
We recall that the parabolic Hausdorff dimension is the Hausdorff dimension with respect to the space-time metric $d((x,t),(y,s))=\max\{d_M(x,y),\sqrt{\abs{t-s}}\}$; in particular, note that $\dim (M\times \R_+)=m+2$.
About ten years later, based on refined blowup-analysis, Lin-Wang proved that if the target manifold $N$ doesn't admit certain harmonic and quasi-harmonic spheres, then the Hausdorff dimension of the singular set must be smaller \cite{LW1}; this in turn was based on the sophisticated blow-up analysis of Lin in the elliptic setting \cite{Lin_elliptic}.

The goal of the present paper is to make these regularity results more quantitative. The main new ingredient that we develop and apply is the quantitative stratification technique \cite{CheegerNaber_Ricci,CheegerNaber_HarmonicMinimal,CheegerHaslhoferNaber_MCF}; this allows us to turn infinitesimal statements from blowup analysis into more quantitative ones.

\subsection{Quantitative stratification for general targets}\label{ss:quant_reg}

Our first main result, Theorem \ref{t:hmf_quant_strat}, holds without any assumptions on the target manifold, and in fact can be formulated in a very general setting. It applies in particular to Chen-Struwe solutions, but it holds for any $H^1_\loc$-map defined on a parabolic ball and with target $N$ that satisfies the local energy bounds (\ref{dirichlet_bound}) and (\ref{struwe_bound}) below.  In principle, we do not even need to assume that the equation (\ref{eqn_hmf}) is satisfied weakly. 
However in practive the estimates (\ref{dirichlet_bound}) and (\ref{struwe_bound}) arise because $u$ is a weak solution.  
This degree of generality is important for two reasons.  To begin with there is more than one notion of a {\it weak} solution to (\ref{eqn_hmf}).  In reasonable situations, these notions should agree, but as of this point, this has not always been proved.  Additionally, one may arrive at the estimates (\ref{dirichlet_bound}) and (\ref{struwe_bound}) in contexts where (\ref{eqn_hmf}) only holds up to a bounded lower order term, allowing the estimates to apply to a much broader class of situations.
 
To describe this setting more precisely, let $R$ be any sufficiently small radius, say less than one-quarter of the injectivity radius of $M$. We then consider $H^1_\loc$-maps $u$ defined on a parabolic ball $P_{4R}=B_{4R}\times(-(4R)^2,(4R)^2)\subset M\times \dR$, and with target $N$. As usual when studying interior regularity, we will derive estimates on a somewhat smaller parabolic ball, say on $P_{R}$.
Our estimates depend on a bound for the \emph{scale invariant Dirichlet-energy},
\begin{equation}\label{dirichlet_bound}
\sup_{X_0\in P_{2R}}\sup_{r\leq 2R}\, \frac{1}{r^m}\int_{P_r(X_0)}\abs{\D u}^2\, dVdt\leq \Lambda_1,
\end{equation}
and a bound for what we call -- alluding to Struwe's monotonicity formula \cite{Struwe_monotonicity} -- the \emph{Struwe-energy},
\begin{equation}\label{struwe_bound}
\sup_{X_0\in P_{2R}}\int_{P^-_{2R}(X_0)} \abs{(x-x_0)\cdot \D u+2(t-t_0)\partial_{t} u}^2 e^{-\tfrac{\abs{x-x_0}^2}{4\abs{t-t_0}}} \abs{t-t_0}^{-(m+2)/2} dVdt\leq \Lambda_2.
\end{equation}
Here, we used the notation $X=(x,t)$ for points in space-time, and $P^-_{r}(X)=B_r(x)\times (t-r^2,t]$ for backwards parabolic balls. The integral in (\ref{struwe_bound}) is computed in geodesic coordinates.
It follows from \cite[Lemma 4.1, 4.2]{ChenStruwe_existence} and lower semicontinuity that 
Chen-Struwe solutions satisfy (\ref{dirichlet_bound}) and (\ref{struwe_bound}) for some constants $\Lambda_1,\Lambda_2<\infty$ depending only on $M,N$ and energy of the initial map $u_0$.
To keep track of the setting we just described, we write $H^1_{\Lambda_1,\Lambda_2}(P_{4R},N)$ for the space of all $H^1_\loc$-maps with target $N$, defined on a sufficiently small parabolic ball $P_{4R}$, and with energy bounded as in (\ref{dirichlet_bound}) and (\ref{struwe_bound}).

\begin{remark}
 In the literature, there is the notion of \emph{suitable} weak solutions, i.e. weak solutions satisfying a parabolic stationarity condition, c.f. \cite{Feldman,ChenLiLin_suitable}.
However, it is not clear whether or not the weak solutions constructed by Chen-Struwe actually satisfy this condition. This is one of the reasons, why we have chosen a more general setting merely requiring (\ref{dirichlet_bound}) and (\ref{struwe_bound}). 
\end{remark}

Let $u\in H^1_{\Lambda_1,\Lambda_2}(P_{4R},N)$ be a $H^1$ function with bounded scale invariant Dirichlet energy and Struwe-energy. In the quantitative stratification we will group the points in $P_{R}$ into different strata according to their number of approximate symmetries at bounded scales. To motivate the definition let us first discuss the nonquantitative notion of weak tangents:
Given $X_0\in P_R$ and $s<R$ we consider the rescaled map $u_{X_0,s}:P_1(0)\subset \dR^m\times \dR\to N$,
$u_{X_0,s}(x,t)=u(\exp_{x_0}(sx),t_0+s^2 t)$.
Note first that by combining (\ref{dirichlet_bound}) and (\ref{struwe_bound}) we obtain scale invariant bounds
\begin{equation}\label{dirichlet2_bound}
\sup_{X_0\in P_{R}}\sup_{r\leq R}\, \frac{1}{r^{m-2}}\int_{P_r(X_0)}\abs{\partial_t u}^2\, dVdt\leq C(\Lambda_1,\Lambda_2).
\end{equation}
By (\ref{dirichlet_bound}) and (\ref{dirichlet2_bound})
for every \emph{blowup sequence} $u_{X_0,s_\alpha}$, $s_\alpha\to 0$ there exists a subsequence such that $u_{X_0,s_\alpha}\wto \varphi$ weakly in $H^1_{\loc}$. Such a limit $\varphi$ is called a \emph{weak tangent}, and by  (\ref{struwe_bound}) every weak tangent is \emph{backwardly selfsimilar}, i.e.
$\varphi(x,t)=\varphi(\lambda x,\lambda^2 t)$ for all $x\in\dR^m,t<0$ and all $\lambda>0$.
It is important to keep track of the number of symmetries of $\varphi$. The number of spatial symmetries is the maximal $d$ for which there exists a $d$-plane $V\subset\dR^m$ such that $\varphi(x,t)=\varphi(x+v,t)$ for all $x\in\dR^m,t<0$ and all $v\in V$. With respect to the behavior in the time direction, we have to distinguish between the following three cases.
\emph{(i) static:} $\varphi$ is independent of $t$ for all $t\in\dR$; \emph{(ii) quasi-static:} $\varphi$ is independent of $t$ up to some time $T\in[0,\infty)$ but not for all $t$; \emph{(iii) shrinking:} $\varphi$ is not independent of time on $(-\infty,0]$.
Note that in the shrinking case we have $\varphi(x,t)=\psi(x/\sqrt{-t})$ for some function $\psi$ that is not radially invariant. We then consider the number of space-time symmetries
\begin{equation}
D(\varphi)= \left\{ \begin{array}{ll}
d(\varphi) & \textrm{if $\varphi$ is shrinking or quasi-static}\\
d(\varphi)+2 & \textrm{if $\varphi$ is static}\\
\end{array} \right.
\end{equation}

We say that $\varphi$ is \emph{$j$-selfsimilar} if it is backwardly selfsimilar and $D(\varphi)\geq j$.

\begin{remark}
Consider a time-dependent map that is equal to some given stationary harmonic map up to some time $T$, and constant for later times. 
This map satisfies  (\ref{dirichlet_bound}) and (\ref{struwe_bound}), and the blowups at the truncation time $T$ are quasi-static. However, it seems to be unknown if 
quasi-static blowups can actually occur in any "interesting" situation, e.g. for Chen-Struwe solutions with smooth initial data; it is known that they cannot occur if the target doesn't admit harmonic two-spheres.\footnote{The situation is similar for the mean curvature flow: There, quasi-static planes occur when one truncates a Brakke flow at some time. Excluding this "trivial" example, it seems unknown if the quasi-static planes can actually occur as blowups of solutions with smooth embedded initial data; it is known that they cannot occur in the mean convex case.}
\end{remark}

One could then proceed by grouping points $X\in P_R$ into weakly singular strata $\mathcal{S}^j(u)$ according to the number of symmetries of the weak tangents. Namely, $X\in \mathcal{S}^j(u)$ if no weak tangent $\varphi$ at $X$ has $D(\varphi)>j$. However, as mentioned above, we instead group points together into quantitative weakly singular strata $\mathcal{S}^j_{\eta,r}(u)$ according to the number of approximate symmetries at certain scales:

\begin{definition}\label{d:hmf_singularstrata}
 For each $\eta>0$ and $0<r<R$, we define the $j$-th quantitative weakly singular stratum
\begin{align}
\cS^j_{\eta,r}(u):=\left\{X\in P_R : \int_{P_1(0)}\abs{u_{X,s}-\varphi}^2>\eta\,\,{\rm for\,\, all}
\,\,s\in[r,R]\,
{\rm and \,\,all}\, (j+1)\text{-selfsimilar } \varphi\right\}.
\end{align}
\end{definition}

\begin{remark}
 In particular, this recovers the standard stratification via $\mathcal{S}^j(u)=\bigcup_\eta\bigcap_r\mathcal{S}^j_{\eta,r}(u)$.
\end{remark}

We write $\Vol$ for the $m+2$ dimensional (parabolic) Hausdorff measure on space-time, and $T_r$ for $r$-tubular neighborhoods with respect to the parabolic metric. Our main quantitative stratification theorem gives estimates for the volume of tubular neighborhoods of the quantitative weakly singular strata:

\begin{theorem}
\label{t:hmf_quant_strat}
If $u:P_{4R}\subset M\times\dR \to N$ is an $H^1_\loc$-map with energy bounded as in (\ref{dirichlet_bound}) and (\ref{struwe_bound}), then the $j$-th quantitative weakly singular stratum $\cS^j_{\eta,r}(u)$ satisfies the volume estimate
\begin{equation}\label{hmf_volest}
\Vol\left(T_r(\cS^j_{\eta,r}(u))\right)\leq Cr^{m+2-j-\varepsilon}\qquad\qquad (0<r<R)\, ,
\end{equation}
for some constant $C<\infty$ depending only on $\varepsilon,\eta,\Lambda_1,\Lambda_2$ and the geometry of $B_{4R}\subset M$ and $N$.
\end{theorem}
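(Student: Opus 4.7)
The plan is to follow the quantitative stratification machinery developed by Cheeger--Naber and extended by the authors to parabolic settings (see \cite{CheegerHaslhoferNaber_MCF}), adapting the four-step template: monotonicity/almost-rigidity, cone-splitting, quantitative differentiation, and an inductive covering. The heart of the argument is to replace the infinitesimal notion of weak tangent with an effective notion of $(j,\eta,r)$-symmetry and then to show that at most points, in most scales, the map looks $(j+1,\eta,r)$-symmetric.

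First, I would introduce the Struwe density $\Theta_u(X_0,r)$, namely the integrand in (\ref{struwe_bound}) on $P_r^-(X_0)$, and observe that under (\ref{dirichlet_bound}), (\ref{struwe_bound}) and (\ref{dirichlet2_bound}) the density is bounded and almost-monotone in $r$. The key almost-rigidity statement I would establish is: for every $\eta>0$ there exists $\delta>0$ such that if $\Theta_u(X_0,2r)-\Theta_u(X_0,\delta r)<\delta$, then $u_{X_0,s}$ is $L^2$-close (on $P_1(0)$) to a backwardly selfsimilar map $\varphi$ for some $s\in[r,2r]$. This follows from the pointed weak $H^1_{\loc}$-compactness of the class $H^1_{\Lambda_1,\Lambda_2}$, lower semicontinuity of the Struwe-energy, and the fact that vanishing of the Struwe integrand forces $(x-x_0)\cdot \nabla u+2(t-t_0)\partial_t u=0$, i.e.\ selfsimilarity. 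The second ingredient is the quantitative cone-splitting lemma: if $u$ is $(0,\delta,r)$-symmetric with respect to a backwardly selfsimilar $\varphi$ at each of $j+1$ points $X_0,\ldots,X_j$ that are "quantitatively independent" in an affine subspace (in the appropriate parabolic sense, distinguishing the static, quasi-static, shrinking trichotomy), then $u$ is $(j,\eta,r/2)$-symmetric at $X_0$. The proof is by contradiction plus compactness: a sequence of counterexamples subconverges to a backwardly selfsimilar limit with $j+1$ independent lines of symmetry, forcing $D(\varphi)\geq j+1$.

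Next I would invoke quantitative differentiation: the total oscillation $\sum_k |\Theta_u(X,2^{-k}R)-\Theta_u(X,2^{-k-1}R)|$ is uniformly bounded since $\Theta_u$ is bounded and almost-monotone. Hence for each $X\in \cS^j_{\eta,r}(u)$ and each fixed $\delta$, the number of scales $s\in[r,R]$ with $\Theta_u(X,2s)-\Theta_u(X,\delta s/2)\geq \delta$ is bounded by $N(\delta,\Lambda_1,\Lambda_2)$, independent of $X$ and $r$. At every other scale, by the almost-rigidity step, $X$ is $(0,\eta',s)$-symmetric for some backwardly selfsimilar $\varphi$, and by the definition of $\cS^j_{\eta,r}$, $\varphi$ cannot be $(j+1)$-selfsimilar; thus $\varphi$ has at most $j$ space-time symmetries.

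Finally, I would run the inductive covering of Cheeger--Naber parabolically. Assume by strong induction on $j$ that the claim holds for all smaller strata. Cover $\cS^j_{\eta,r}(u)\cap P_R$ by parabolic balls $P_{r_\alpha}(X_\alpha)$ of controlled radii as follows: at each point $X$, apply cone-splitting in the contrapositive. Either (i) within $P_{s}(X)$ the set $\cS^j_{\eta,r}$ is contained in a $\delta s$-tubular neighborhood of a $j$-dimensional parabolic affine subspace (yielding the estimate $C s^{m+2-j}$ for the volume at that scale), or (ii) there exist quantitatively independent points at which $u$ is $(0,\delta,s)$-symmetric with $(j+1)$ total symmetries; cone-splitting then upgrades $X$ to $(j+1)$-symmetry at scale $s/2$, contradicting membership in $\cS^j_{\eta,r}$. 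Iterating this dichotomy down to scale $r$, and noting that the "bad scale" scenario occurs at most $N(\delta)$ times along any descending chain of balls by the quantitative differentiation step, produces a covering whose volume is bounded by $C(\varepsilon)\, r^{m+2-j-\varepsilon}$, where the loss $\varepsilon$ accounts for the geometric series of refinements; the factor $r^{-\varepsilon}$ absorbs $N(\delta)^{\log(R/r)}$ when $\delta$ is chosen so that $N(\delta)\leq 2^\varepsilon$ after the standard parameter optimization.

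The main obstacle I expect is controlling the parabolic cone-splitting rigorously across the three regimes (static, quasi-static, shrinking), since the notion of "space-time symmetry" $D(\varphi)$ is discontinuous at the boundary between quasi-static and static blowups, and ensuring that approximate symmetries at quantitatively independent points combine correctly will require a careful compactness argument that distinguishes the temporal behavior. A secondary technical point is verifying that the almost-monotonicity of the Struwe density holds in the generality of $H^1_{\Lambda_1,\Lambda_2}(P_{4R},N)$ rather than only for genuine Chen--Struwe solutions; this should be extractable directly from the bound (\ref{struwe_bound}) together with (\ref{dirichlet2_bound}), but it is the place where the hypotheses of the theorem are really used.
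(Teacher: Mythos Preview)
Your proposal follows essentially the same approach as the paper: almost-rigidity from the Struwe-energy bound (your $\Theta_u(X_0,r)$ is exactly the paper's cumulative $(r,0)$-Struwe energy, so its differences over annuli coincide with the paper's $\cW_{r_1,r_2}$), parabolic cone-splitting with the static/quasi-static/shrinking trichotomy, quantitative differentiation, and an iterative covering. The paper organizes the covering via an explicit decomposition of $P_1$ into sets $E_{T^\beta}$ indexed by $\{0,1\}$-tuples recording good and bad scales, but this is equivalent to your dichotomy-based iteration. You are also right that the quasi-static case is the delicate one; the paper treats it with a separate lemma.

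There is, however, a genuine confusion in your final bookkeeping. The number $N=N(\delta,\Lambda_2)$ of bad scales is the \emph{total} number over all dyadic scales in $[r,R]$, not a per-step quantity; hence the loss from bad scales is a fixed constant $C^{N}$, not $N(\delta)^{\log(R/r)}$. Consequently you cannot ``choose $\delta$ so that $N(\delta)\leq 2^\varepsilon$'': the rigidity and cone-splitting lemmas force $\delta$ to be small, and then $N(\delta)\sim\delta^{-1}\Lambda_2$ is large but fixed. The $r^{-\varepsilon}$ loss in the paper comes from two other sources: (i) the doubling-constant overhead $c_0$ incurred at each of the $\beta\sim\log_\gamma(R/r)$ covering refinements, absorbed by choosing the scale ratio $\gamma=c_0^{-2/\varepsilon}$ so that $c_0^\beta=(\gamma^\beta)^{-\varepsilon/2}$; and (ii) a combinatorial factor $\beta^{Q}$ counting the admissible good/bad patterns, which grows only polynomially in $\beta$ and is likewise absorbed into $r^{-\varepsilon/2}$. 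Finally, no induction on $j$ is needed; the covering argument yields the estimate for each fixed $j$ directly.
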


\begin{remark}
 In particular, this generalizes the known Hausdorff dimension estimate $\dim \mathcal{S}^j(u)\leq j$ for suitable weak solutions of the harmonic map flow in various ways.
 First, it replaces the weakly singular strata $\mathcal{S}^j(u)$ by the more effective quantitative weakly singular strata $\mathcal{S}^j_{\eta,r}(u)$. Second, it shows that tubular neighborhoods have small volume, i.e. it improves the Hausdorff estimate to a Minkowski estimate. Third, it applies to a much larger class of maps.
\end{remark}

\subsection{Higher regulartiy for certain targets}\label{ss:quant_reg_certain_targets}

Following Lin \cite{Lin_elliptic} and Lin-Wang \cite{LW1}, we consider target manifolds $N$ that do not admit harmonic two-spheres, i.e. no nonconstant smooth harmonic maps $S^2\to N$.
By \cite[Prop. 4.1]{LW1} this will allow us to upgrade weak convergence in $H^1_{\loc}$ to strong convergence (see also the discussion on page 788 of \cite{Lin_elliptic}).

\begin{remark}
 By Sacks-Uhlenbeck \cite[Thm. 5.7]{SacksUhlenbeck} the assumption that the target does not admit harmonic two-spheres implies that its universal cover must be contractible.  See Section \ref{s:examples} for more on this. 
\end{remark}

To state our higher regularity results, we introduce the following definitions.

\begin{definition}[regularity scale]
\label{d:stationary_harmonic_reg_scale}
We define the regularity scale of a harmonic map flow $u:M\times I\to N$ at a point $X=(x,t)$ by
\begin{align}
 r_u(X):=\sup\left\{r\geq 0: \sup_{P_r(X)} r|\nabla u|+r^2|\nabla^2 u|\leq 1\right\}.
\end{align}
\end{definition}

\begin{remark}\label{r:hmf_reg_scale}
The regularity scale controls all other geometric quantities. Indeed, standard interior estimates give scale invariant bounds for all (spatial and time) derivatives on a somewhat smaller parabolic ball, say on $P_{r_{u}(X)/2}(X)$.
\end{remark}

\begin{definition}[$r$-bad set]\label{def_bad_set}
Given $u:M\times I\rightarrow N$ and $r>0$ we define the $r$-bad set
\begin{align}
 \cB_{r}(u):= \{X\in M\times I: r_{u}(X)\leq r\}.
\end{align}
\end{definition}

Our second main theorem shows that the bad set is quite small, provided the target doesn't admit harmonic two-spheres, and even smaller provided the target doesn't admit certain harmonic and quasi-harmonic spheres. We recall that a quasi-harmonic $\ell$-sphere is a nonconstant smooth map $\psi:\R^\ell\to N$ that is a critical point of the functional $\int_{\R^\ell}\abs{D\psi}^2 e^{-x^2/4}$.
Such maps $\psi$ correspond to selfsimilarly shrinking solutions $u(x,t)=\psi(x/\sqrt{-t})$ of the harmonic map flow.

\begin{theorem}
\label{t:harmonic_map_flow_higher_regularity}
Let $M^m$, $N^n$ be closed manifolds and $u:M\times\R_+\to N$ a Chen-Struwe solution of the harmonic map flow (\ref{eqn_hmf}).
If $N$ does not admit harmonic two-spheres, then the bad set (c.f. Definition \ref{def_bad_set}) satisfies the volume estimate
\begin{equation}
\Vol\left(T_r(\cB_{r}(u))\cap \{t<T\}\right)\leq Cr^{4-\varepsilon}\qquad (0<r<1),
\end{equation}
for some constant $C=C(M,N,u_0,T,\varepsilon)<\infty$. In particular, the parabolic Minkowski dimension of the singular set $\mathcal{S}\subset M\times\R_+$ is at most $m-2$.
More generally, if $N$ doesn't admit harmonic $\ell$-spheres for $\ell=2,\ldots, k$ and quasi-harmonic  $\ell$-spheres for $\ell=3,\ldots, k-1$, then
\begin{equation}
\Vol\left(T_r(\cB_{r}(u))\cap \{t<T\}\right)\leq Cr^{k+2-\varepsilon}\qquad (0<r<1),
\end{equation}
and the parabolic Minkowski dimension of the singular set is at most $m-k$.
\end{theorem}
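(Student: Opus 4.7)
The plan is to derive Theorem \ref{t:harmonic_map_flow_higher_regularity} as a corollary of the quantitative stratification result Theorem \ref{t:hmf_quant_strat}, linked by an $\varepsilon$-regularity statement asserting that the bad set is contained in a quantitative weakly singular stratum. Specifically, I will establish that there exist $\eta,c>0$ depending on $M,N,u_0,T$ such that
\begin{equation}\label{eq:bad_in_strat}
\cB_r(u)\cap\{t<T\} \,\subset\, \cS^{m-k}_{\eta,cr}(u)
\end{equation}
for all $0<r<1$. Granting (\ref{eq:bad_in_strat}), Theorem \ref{t:hmf_quant_strat} applied with $j=m-k$ on a finite cover of $M\times[0,T]$ by parabolic balls of fixed radius immediately yields
\begin{equation}
\Vol\bigl(T_r(\cB_r(u))\cap\{t<T\}\bigr) \,\leq\, \Vol\bigl(T_{cr}(\cS^{m-k}_{\eta,cr}(u))\bigr) \,\leq\, C r^{k+2-\eps},
\end{equation}
and the Minkowski dimension bound for the singular set $\mathcal{S}\subset\bigcap_r \cB_r(u)$ is then immediate. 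The first statement of the theorem corresponds to $k=2$.

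The inclusion (\ref{eq:bad_in_strat}) is proved by a contradiction/compactness argument in the spirit of Lin \cite{Lin_elliptic} and Lin-Wang \cite{LW1}. If it failed, one could extract a sequence of Chen-Struwe solutions $u_i$ with uniform $\Lambda_1,\Lambda_2$ bounds, points $X_i$ with $r_{u_i}(X_i)\leq r_i$, and scales $s_i\in[c_i r_i,R]$ at which the rescalings $u_{i,X_i,s_i}$ are $\eta_i$-close in $L^2(P_1)$ to some $(m-k+1)$-selfsimilar maps $\varphi_i$, with $\eta_i,c_i\to 0$. The energy bounds (\ref{dirichlet_bound}), (\ref{struwe_bound}), combined with the strong $H^1_\loc$-compactness of \cite[Prop.~4.1]{LW1} (which crucially uses the no-harmonic-two-sphere hypothesis), allow us to pass to a subsequential strong $H^1_\loc$-limit $\varphi$. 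The limit is backwardly selfsimilar and inherits all $(j+1)$ approximate symmetries of the $\varphi_i$ under strong convergence, hence is itself $(m-k+1)$-selfsimilar. On the other hand, since $r_i/s_i\to 0$ the rescalings have regularity scale at the origin tending to zero, and strong convergence forces $\varphi$ to fail to be smooth at $0$. The problem thus reduces to showing every $(m-k+1)$-selfsimilar weak tangent is (smooth and) constant.

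This triviality step is the main technical obstacle, and it is carried out by dimension reduction along the plane of spatial symmetries, followed by invocation of the sphere hypotheses on $N$. The no-harmonic-two-sphere hypothesis rules out quasi-static tangents outright (as noted in the paper's own remark), so only two cases remain. If $\varphi$ is \emph{static}, then $\varphi(x,t)=\varphi_0(x)$ with $\varphi_0$ $0$-homogeneous (by backward selfsimilarity applied at two values of $t<0$) and invariant under a $d$-plane with $d\geq m-k-1$; quotienting reduces $\varphi_0$ to a $0$-homogeneous harmonic map $\R^{m-d}\to N$, i.e.\ a harmonic $\ell$-sphere with $\ell=m-d-1\leq k$, which is constant by hypothesis (the trivial cases $\ell\in\{0,1\}$ being ruled out directly by homogeneity). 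If $\varphi$ is \emph{shrinking}, then $\varphi(x,t)=\psi(x/\sqrt{-t})$ and the $d\geq m-k+1$ spatial symmetries force $\psi$ to depend only on at most $m-d\leq k-1$ variables, giving a quasi-harmonic $\ell$-sphere with $\ell\leq k-1$: the hypothesis excludes $\ell\in\{3,\dots,k-1\}$ directly, $\ell=2$ follows from the standard removability argument that the absence of harmonic two-spheres forbids quasi-harmonic two-spheres (cf.\ \cite{LW1}), and $\ell\leq 1$ is trivial. In both cases $\varphi$ must be constant, producing the desired contradiction and completing the proof.
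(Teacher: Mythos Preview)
Your proof follows essentially the same route as the paper's: reduce to a local $\varepsilon$-regularity statement (the paper's Lemma \ref{l:hmf_epsilon_regularity}) giving $\cB_r(u)\subset \cS^{m-k}_{\eta,r}(u)$, prove it by contradiction using the strong $H^1_\loc$-compactness of \cite[Prop.~4.1]{LW1} and a dimension-reduction classification of $(m-k+1)$-selfsimilar limits via the sphere hypotheses, then invoke Theorem \ref{t:hmf_quant_strat}. One imprecision worth correcting: strong $H^1_\loc$ convergence with $r_{u_i}(0)\to 0$ does \emph{not} by itself force the limit $\varphi$ to be singular at the origin (regularity scale is a $C^2$ quantity); the paper instead runs the contradiction the other way---once $\varphi$ is shown to be constant, strong $H^1$ convergence combined with the standard small-energy $\varepsilon$-regularity \cite[Thm.~5.3]{Struwe_monotonicity} gives $r_{u_\alpha}(0)\geq 1$ for large $\alpha$, contradicting $r_{u_\alpha}(0)<1$.
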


\begin{remark}
 Theorem \ref{t:harmonic_map_flow_higher_regularity} is a quantitative version of \cite[Thm. 4.3]{LW1}. In particular, it improves the Hausdorff estimate to a Minkowski estimate.
\end{remark}

As an immediate consequence of Theorem \ref{t:harmonic_map_flow_higher_regularity}, we obtain $L^p$-estimates for the reciprocal of the regularity scale. In particular, this gives integral estimates for the derivatives of $u$.

\begin{corollary}\label{c:harmonic_map_flow_higher_regularity}
Let $M^m$, $N^n$ be closed manifolds and $u:M\times\R_+\to N$ a Chen-Struwe solution of the harmonic map flow (\ref{eqn_hmf}).
If $N$ doesn't admit harmonic $\ell$-spheres for $\ell=2,\ldots, k$ and quasi-harmonic  $\ell$-spheres for $\ell=3,\ldots, k-1$, then
for every $p<k+2$ we have
$$
\int_0^T\int_M r_{u}^{-p} dV dt\leq C,
$$
for some $C=C(M,N,u_0,T,p)<\infty$. In particular, there are estimates for the derivatives of $u$,
\begin{equation}
\label{er}
\int_0^T\int_M |\nabla^j u|^{\frac{p}{j}} dV dt \leq C_j ,
\end{equation}
for some constants $C_j=C_j(M,N,u_0,T,p)<\infty$ ($j=1,2\ldots$).
\end{corollary}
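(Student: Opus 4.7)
The plan is to derive the $L^p$ estimate on $r_u^{-1}$ from Theorem \ref{t:harmonic_map_flow_higher_regularity} by a layer-cake / Cavalieri argument, and then obtain the derivative bounds from the standard interior estimates at the regularity scale (Remark \ref{r:hmf_reg_scale}). The work in Theorem \ref{t:harmonic_map_flow_higher_regularity} is what makes this proof essentially routine, so I do not expect a substantial obstacle here.

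First, I apply the layer-cake formula to the nonnegative function $r_u^{-1}$:
\begin{equation*}
\int_0^T\!\!\int_M r_u^{-p}\,dV\,dt=\int_0^\infty p s^{p-1}\Vol\!\left(\{r_u^{-1}>s\}\cap(M\times[0,T])\right)ds.
\end{equation*}
After the change of variables $r=1/s$, this rewrites as
\begin{equation*}
\int_0^T\!\!\int_M r_u^{-p}\,dV\,dt=\int_0^\infty p r^{-p-1}\Vol\!\left(\cB_r(u)\cap\{t<T\}\right)dr.
\end{equation*}
I then split the $r$-integral into $\{r\geq 1\}$ and $\{0<r<1\}$. The tail $\{r\geq 1\}$ is controlled by the trivial bound $\Vol(\cB_r(u)\cap\{t<T\})\leq\Vol(M)\cdot T$, and $\int_1^\infty r^{-p-1}\,dr<\infty$ for $p>0$.

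For the small-scale piece $\{0<r<1\}$, I use Theorem \ref{t:harmonic_map_flow_higher_regularity} together with the inclusion $\cB_r(u)\subset T_r(\cB_r(u))$. This gives, for any $\varepsilon>0$,
\begin{equation*}
\Vol\!\left(\cB_r(u)\cap\{t<T\}\right)\leq Cr^{k+2-\varepsilon},\qquad 0<r<1,
\end{equation*}
with $C=C(M,N,u_0,T,\varepsilon)$. Plugging this in yields
\begin{equation*}
\int_0^1 p r^{-p-1}\Vol\!\left(\cB_r(u)\cap\{t<T\}\right)dr\leq Cp\int_0^1 r^{k+1-p-\varepsilon}\,dr,
\end{equation*}
which is finite provided $k+1-p-\varepsilon>-1$, i.e., $p<k+2-\varepsilon$. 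Since $\varepsilon>0$ is arbitrary, given any $p<k+2$ I choose $\varepsilon=(k+2-p)/2>0$ and conclude that $\int_0^T\!\int_M r_u^{-p}\,dV\,dt\leq C$ with $C=C(M,N,u_0,T,p)<\infty$.

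For (\ref{er}), I use Remark \ref{r:hmf_reg_scale}: standard interior parabolic regularity, applied on $P_{r_u(X)/2}(X)$, gives scale-invariant bounds for all spatial and time derivatives, and in particular
\begin{equation*}
|\nabla^j u|(X)\leq C_j\, r_u(X)^{-j}\qquad(j=1,2,\ldots),
\end{equation*}
with $C_j$ depending only on $j$, $M$ and $N$. Raising this to the power $p/j$ gives $|\nabla^j u|^{p/j}\leq C_j^{p/j}\,r_u^{-p}$ pointwise, so integrating over $M\times[0,T]$ and invoking the $L^p$-estimate for $r_u^{-1}$ just proved yields (\ref{er}).
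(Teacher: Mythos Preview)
Your argument is correct and follows exactly the approach indicated in the paper: apply the layer-cake formula to $r_u^{-p}$, control the superlevel sets via the volume estimate of Theorem \ref{t:harmonic_map_flow_higher_regularity}, and then derive the derivative bounds from the scale-invariant interior estimates of Remark \ref{r:hmf_reg_scale}. The paper's proof consists of a single sentence to this effect, and you have simply filled in the routine details accurately.
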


We end this section with the following particularly important corollary.  In \cite{CheegerNaber_HarmonicMinimal}, the first general higher derivative estimates for {\it minimizing} harmonic maps were proved. By using Theorem \ref{t:harmonic_map_flow_higher_regularity} we can prove the same estimates for {\it stationary} harmonic maps when the target space does not admit any harmonic $2$-spheres.  Specifically:
\begin{corollary}\label{t:stationary_regularity}
Let $M$, $N$ be closed manifolds and $u:M\to N$ a stationary harmonic map with $\int_M |\nabla u|^2 < \Lambda$.  If $N$ does not admit any harmonic $\ell$-spheres for $\ell=2,\ldots, k$, then for every $p<k+2$ we have
$$
\int_M reg_{u}^{-p} dV\leq C,
$$
for some $C=C(M,N,\Lambda,p)<\infty$, where $reg_u(x)\equiv \sup_{r\leq 1}\{\sup_{B_r(x)} r|\nabla u|+r^2|\nabla^2 u|\leq 1\}$ . In particular, there are estimates for the higher derivatives of $u$,
\begin{equation}
\label{er}
\int_M |\nabla^j u|^{\frac{p}{j}} dV \leq C_j ,
\end{equation}
for some constants $C_j=C_j(M,N,\Lambda,p)<\infty$ ($j=1,2\ldots$).
\end{corollary}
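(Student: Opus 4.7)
The plan is to reduce the elliptic statement to the parabolic Theorem \ref{t:harmonic_map_flow_higher_regularity} by viewing a stationary harmonic map as a time-independent weak solution of the harmonic map flow. Define $\tilde u:M\times\R\to N$ by $\tilde u(x,t)=u(x)$. Since $u$ is stationary harmonic and $\partial_t\tilde u\equiv 0$, the map $\tilde u$ solves \eqref{eqn_hmf} weakly and is \emph{static}. The goal is to apply (the proof of) Theorem \ref{t:harmonic_map_flow_higher_regularity} to $\tilde u$ and translate the resulting bad set estimate back to the spatial setting.

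The first step is to verify that $\tilde u\in H^1_{\Lambda_1,\Lambda_2}(P_{4R},N)$ for constants depending only on $M,N,\Lambda$. The Dirichlet bound \eqref{dirichlet_bound} follows from Price's monotonicity formula: the density $\theta(x_0,r)=r^{2-m}\int_{B_r(x_0)}|\nabla u|^2$ is monotone non-decreasing in $r$ along a stationary harmonic map, so $\theta(x_0,r)\leq C(M)\Lambda$ uniformly. Since $\int_{P_r(X_0)}|\D\tilde u|^2\,dVdt=2r^2\int_{B_r(x_0)}|\D u|^2\,dV$, this gives $r^{-m}\int_{P_r(X_0)}|\D\tilde u|^2\leq 2\theta(x_0,r)\leq \Lambda_1$. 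For the Struwe-energy \eqref{struwe_bound}, since $\partial_t\tilde u\equiv 0$ the integrand reduces to $|(x-x_0)\cdot\D u|^2 e^{-|x-x_0|^2/4(t_0-t)}|t_0-t|^{-(m+2)/2}$. Performing the change of variables $s=|x-x_0|^2/4(t_0-t)$ to integrate out time yields a multiple of $\int_{B_{2R}(x_0)}|(x-x_0)\cdot\D u|^2|x-x_0|^{-m}\,dV$, which is precisely the quantity controlled by the classical stationary identity (obtained by testing the stationarity equation against the radial vector field $\xi=(x-x_0)$).

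Next, I would run the arguments underlying Theorem \ref{t:harmonic_map_flow_higher_regularity} for $\tilde u$. The crucial observation is that because $\tilde u$ is static, every rescaled map $\tilde u_{X_0,s}$ is static and therefore so is every weak tangent $\varphi$; by the symmetry classification $D(\varphi)=d(\varphi)+2$, and $\varphi$ is pulled back from a harmonic map $\R^\ell\to N$ with $\ell\le m$. In particular, \emph{no} quasi-harmonic sphere ever arises as a blowup of $\tilde u$, so the hypothesis that $N$ admits no harmonic $\ell$-spheres for $\ell=2,\dots,k$ is sufficient to carry out the iterated dimension reduction and $\eps$-regularity arguments of Theorem \ref{t:harmonic_map_flow_higher_regularity} in this setting. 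This yields the bad set estimate $\Vol\bigl(T_r(\cB_r(\tilde u))\cap\{0<t<1\}\bigr)\leq Cr^{k+2-\eps}$. Since $\tilde u$ is time-independent, one checks $r_{\tilde u}(x,t)=\min(reg_u(x),1)$ on the interior of the time slab, so the parabolic bad set has the product form $\{reg_u\leq r\}\times(r^2,1-r^2)$. Projecting onto $M$ gives the spatial Minkowski estimate $\Vol_M(T_r\{reg_u\leq r\})\leq Cr^{k+2-\eps}$.

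Finally, a layer-cake decomposition converts this into the $L^p$-bound: for $p<k+2$, choose $\eps=(k+2-p)/2$ so that
\begin{equation*}
\int_M reg_u^{-p}\,dV= p\int_0^1 r^{-p-1}\Vol\{reg_u\leq r\}\,dr+O(1)\leq C\int_0^1 r^{k+1-\eps-p}\,dr+C<\infty.
\end{equation*}
The higher derivative estimates \eqref{er} then follow from interior elliptic regularity at scale $reg_u(x)$, which gives the pointwise bound $|\D^j u(x)|\leq C_j\,reg_u(x)^{-j}$. The main obstacle is really the second step: carefully verifying that the Struwe bound \eqref{struwe_bound} for the static extension reduces to Price's monotonicity, and checking that the parabolic quantitative stratification and $\eps$-regularity machinery of Theorem \ref{t:harmonic_map_flow_higher_regularity} specializes cleanly to the static case so that only harmonic (not quasi-harmonic) spheres need to be excluded.
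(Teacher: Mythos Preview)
Your approach is correct and is precisely the route the paper intends: the paper does not spell out a separate proof of this corollary but simply remarks that one obtains it ``by using Theorem \ref{t:harmonic_map_flow_higher_regularity}'', i.e.\ by treating the stationary map as the static solution $\tilde u(x,t)=u(x)$ of the harmonic map flow, invoking the quantitative stratification estimate, and reading off the elliptic conclusion. Your verification of the two energy bounds via Price's monotonicity (for \eqref{dirichlet_bound}) and the radial stationarity identity after integrating out time (for \eqref{struwe_bound}), together with the observation that static blowups never produce quasi-harmonic spheres, is exactly the content that the paper leaves implicit. The only caveat, which you already flag, is that the $\varepsilon$-regularity Lemma \ref{l:hmf_epsilon_regularity} is stated for Chen--Struwe solutions and relies on the Lin--Wang parabolic compactness; for the static extension one should instead invoke Lin's elliptic strong compactness for stationary harmonic maps from \cite{Lin_elliptic}, after which the rest of the argument (Theorem \ref{t:hmf_quant_strat} plus layer-cake) goes through unchanged.
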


\textbf{Organization of the paper.} In Section \ref{s:hmf_decomposition}, we prove Theorem \ref{t:hmf_quant_strat}.
In Section \ref{s:hmf_higher_regularity}, we prove an $\eps$-regularity lemma, and use this in combination with Theorem \ref{t:hmf_quant_strat} to prove Theorem \ref{t:harmonic_map_flow_higher_regularity}.  In Section \ref{s:examples}, we provide some examples of compact manifolds which do not admit harmonic $2$-spheres.  In particular, we provide examples which do not have nonpositive sectional curvature.\\

\textbf{Acknowledgements.} 
We are grateful to  Fanghua Lin and Harold Rosenberg for several helpful conversations.

\section{Volume estimates for the quantitative strata}\label{s:hmf_decomposition}

In this section, we prove Theorem \ref{t:hmf_quant_strat} following closely our argument for Brakke flows in \cite{CheegerHaslhoferNaber_MCF}. We first prove a quantitative rigidity lemma and decompose $P_R$ into a union of sets, according to the behavior of points at different scales. By virtue of a quantitative differentiation argument, we show that the number of sets in this decomposition grows at most polynomially. We then establish a cone-splitting lemma for weak tangents and prove, roughly speaking, that at their good scales points in  $\cS^j_{\eta,r}(u)$ line up along at most $j$-dimensional subspaces. Finally, we conclude the argument by constructing a suitable covering of $\cS^j_{\eta,r}(u)$ and computing its volume.
For ease of notation, in the following we will pretend $R=1$ and $P_4=P_4(0)\subset \dR^m\times \dR$; the general case works similarly.

\subsection{Energy decomposition}\label{ss:energy_decomposition}

The goal of this subsection is to decompose $P_1$ into a union of sets $E_{T^\beta}$, according to the behavior of points at different scales. As in \cite{CheegerNaber_Ricci,CheegerNaber_HarmonicMinimal,CheegerHaslhoferNaber_MCF} it will be of crucial importance that we can deal separately with each individual set $E_{T^\beta}$, all of whose points have the same $\{0,1\}$-valued $\beta$-tuple $T^\beta$ of good and bad scales.

\begin{definition}
\label{d:hmf_almost_selfsimilar}
A map $u\in H^1_{\Lambda_1,\Lambda_2}(P_{4r}(X),N)$ is \emph{$(\varepsilon,r,j)$-selfsimilar} at $X=(x,t)$ if there exists a $j$-selfsimilar map $\varphi$ such that
$$
\int_{P_1(0)}\abs{u_{X,r}-\varphi}^2<\varepsilon
$$
If $\varphi$ is shrinking with respect to a plane $V^j$, we put $W_X=(x+V)\times\{t\}$. If $\varphi$ is quasi-static with respect to $V^j$ up to time $T$, we put $W_X=(x+V)\times (-\infty,t+r^2T]$. If $\varphi$ is static with respect to $V^{j-2}$, we put $W_X=(x+V)\times\dR$.  We say that $u$ is {\it $(\varepsilon,r,j)$-selfsimilar at $X$ with respect to $W_X$}.
\end{definition}

\begin{definition}
\label{d:hmf_rel_struwe_energy}
Given $u\in H^1_{\Lambda_1,\Lambda_2}(P_{4},N)$, for $X_0\in P_1$ and $1/2 > r_1>r_2$, we define the \emph{$(r_1,r_2)$-Struwe energy} by
\begin{equation}
\cW_{r_1,r_2}(u,X_0):=\int_{P^-_{r_1}(X_0)\setminus P^-_{r_2}(X_0)} \abs{(x-x_0)\cdot \D u+2(t-t_0)\partial_{t} u}^2 e^{-\tfrac{\abs{x-x_0}^2}{4\abs{t-t_0}}} \abs{t-t_0}^{-(m+2)/2} dVdt.
\end{equation}
\end{definition}

\begin{lemma}[Quantitative Rigidity]
\label{t:hmf_monotone_rigidity}
For all $\varepsilon >0$, $\Lambda_1,\Lambda_2<\infty$, $m$ and $N$ there exists $\delta=\delta(\varepsilon,\Lambda_1,\Lambda_2,m,N)>0$, such that if $u\in H^1_{\Lambda_1,\Lambda_2}(P_{4},N)$
satisfies
\begin{align}\label{e:current_almostrigid}
\cW_{r,\delta r}(u,X)\leq \delta,
\end{align}
for some $X\in P_1$ and some $r\in(0,1/2)$, then $u$ is $(\varepsilon,r,0)$-selfsimilar at $X$.
\end{lemma}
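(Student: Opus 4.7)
The argument is a standard compactness/contradiction scheme. Suppose the lemma fails; then there exist $\varepsilon_0 > 0$, maps $u_i \in H^1_{\Lambda_1,\Lambda_2}(P_4, N)$, points $X_i \in P_1$, scales $r_i \in (0, 1/2)$, and $\delta_i \downarrow 0$ with $\cW_{r_i, \delta_i r_i}(u_i, X_i) \leq \delta_i$ but no $u_i$ is $(\varepsilon_0, r_i, 0)$-selfsimilar at $X_i$. Passing to geodesic coordinates at $x_i$ and rescaling, set $v_i := (u_i)_{X_i, r_i}$, defined (for $i$ large) on $P_2(0) \subset \dR^m \times \dR$. A direct change of variables shows the Struwe-energy integrand is scale-invariant, so the hypothesis becomes $\cW_{1, \delta_i}(v_i, 0) \leq \delta_i \to 0$, while (\ref{dirichlet_bound}) and (\ref{dirichlet2_bound}) yield a uniform bound on $\Abs{v_i}_{H^1(P_2)}$. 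Extract a subsequence with $v_i \wto \varphi$ weakly in $H^1_{\loc}$ and strongly (by Rellich) in $L^2_{\loc}$; passing to a further subsequence gives pointwise a.e.\ convergence, so $\varphi$ is $N$-valued.

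The heart of the argument is to show $\varphi$ is backwardly selfsimilar on $P^-_1(0)$. For any fixed $\rho \in (0, 1)$, once $\delta_i < \rho$ we have $P^-_1 \setminus P^-_\rho \subset P^-_1 \setminus P^-_{\delta_i}$, and on this fixed annulus the weight $\omega(x,t) := e^{-|x|^2/(4|t|)} |t|^{-(m+2)/2}$ is a bounded, smooth, strictly positive function. The functional $v \mapsto \int_{P^-_1 \setminus P^-_\rho} |x \cdot \D v + 2t\, \partial_t v|^2 \, \omega\, dVdt$ is convex and continuous on $H^1$, hence lower semicontinuous under weak $H^1$ convergence; therefore
\begin{equation*}
\int_{P^-_1 \setminus P^-_\rho} |x \cdot \D \varphi + 2t\, \partial_t \varphi|^2\, \omega\, dVdt \,\leq\, \liminf_{i\to\infty} \cW_{1, \delta_i}(v_i, 0) \,=\, 0.
\end{equation*}
Sending $\rho \downarrow 0$ via monotone convergence gives $x \cdot \D \varphi + 2t\, \partial_t \varphi = 0$ a.e.\ on $P^-_1(0)$. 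This is exactly the infinitesimal generator of parabolic rescaling, so $\varphi$ is backwardly selfsimilar and thus $0$-selfsimilar in the sense used in Definition \ref{d:hmf_almost_selfsimilar}.

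To finish, strong $L^2$ convergence $v_i \to \varphi$ on $P_1(0)$ gives $\int_{P_1(0)} |v_i - \varphi|^2 < \varepsilon_0$ for $i$ large. Taking $\varphi$ as the competitor in Definition \ref{d:hmf_almost_selfsimilar} shows $v_i$ is $(\varepsilon_0, 1, 0)$-selfsimilar at $0$, contradicting the standing assumption and proving the lemma with $\delta = \delta_i$ for some sufficiently large $i$.

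The main anticipated obstacle is the lower-semicontinuity step: the weight $\omega$ degenerates as $t \to 0$, so one cannot apply weak lower semicontinuity directly on the full backward ball. The two-step device above — freeze $\rho$, apply lower semicontinuity on the annulus where $\omega$ is uniformly controlled, then send $\rho \downarrow 0$ — resolves this, but crucially requires $\delta_i \to 0$ so that eventually $\delta_i < \rho$. A subsidiary technical point is the uniform scale-invariant bound on $v_i$: since $M$ is only approximately Euclidean at small scales, one uses smallness of $R$ relative to the injectivity radius together with the scale-invariance built into (\ref{dirichlet_bound}) and (\ref{struwe_bound}) to pass from bounds on $M$ to bounds on the flat model $\dR^m$.
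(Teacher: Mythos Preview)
Your proof is correct and follows essentially the same compactness/contradiction scheme as the paper's own proof. The paper's argument is terser---it jumps directly to the rescaled sequence and asserts that the weak $H^1$ limit is $0$-selfsimilar without spelling out the lower-semicontinuity step---whereas you make the rescaling explicit and carefully justify self-similarity of the limit via the freeze-$\rho$-then-send-$\rho\downarrow 0$ device; these are welcome details that the paper elides.
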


\begin{proof}
If not, then there exist a sequence of maps $u_\alpha\in H^1(P_{4},N)$ with
\begin{equation}\label{contr_assumpt}
\int_{P^-_{1}\setminus P^-_{1/\alpha}} \abs{x\cdot \D u_\alpha+2t\partial_{t} u_\alpha}^2 e^{-\tfrac{\abs{x}^2}{4\abs{t}}} \abs{t}^{-(m+2)/2} dVdt\leq 1/\alpha,
\end{equation}
and scale invariant bounds as in (\ref{dirichlet_bound}) and (\ref{dirichlet2_bound}), but such that 
\begin{equation}\label{hmf_biggereps}
\int_{P_1}\abs{u_{\alpha}-\varphi}^2\geq\varepsilon
\end{equation}
for all $0$-selfsimilar $\varphi$. However, it follows from (\ref{dirichlet_bound}), (\ref{dirichlet2_bound}) and (\ref{contr_assumpt}) that, after passing to a subsequence, $u_\alpha\wto\varphi$ weakly in $H^1$ for some $0$-selfsimilar $\varphi$. Since the convergence is strong in $L^2$, for $\alpha$ large enough we obtain a contradiction with (\ref{hmf_biggereps}).
\end{proof}

Let $u\in H^1_{\Lambda_1,\Lambda_2}(P_4,N)$ and $X\in P_1$. Given constants $0<\gamma<1/2$ and $\delta>0$ and an integer $q<\infty$ (these parameters will be fixed suitably in Section \ref{hmf_conclusion}), let $K$ be the number of $\alpha>q$ such that
$$
\cW_{\gamma^{\alpha-q},\gamma^{\alpha+q}}(u,X)>\delta\, .
$$
Since the Struwe-energy is bounded by $\Lambda_2$ it follows that
\begin{equation}
\label{e:Nb}
K\leq (2q+1)\delta^{-1}\Lambda_2\, .
\end{equation}
Otherwise, there would be at least $\delta^{-1}\Lambda_2$ disjoint intervalls of the form $(\gamma^{\alpha-q},\gamma^{\alpha+q})$ with $\cW_{\gamma^{\alpha-q},\gamma^{\alpha+q}}(u,X)>\delta$. This is an instance of {\it quantitative differentiation} (see \cite{Cheeger_general_perspective} for a general perspective).

 For each point $X\in P_1$, to keep track of its behavior at different scales, we define a $\{0,1\}$-valued sequence $(T_\alpha(X))_{\alpha\geq 1}$ as follows. By definition, $T_\alpha(X)=1$ if $\alpha\leq q$ or $\cW_{\gamma^{\alpha-q},\gamma^{\alpha+q}}(u,X)>\delta$, and $T_\alpha(X)=0$ if $\alpha>q$ and $\cW_{\gamma^{\alpha-q},\gamma^{\alpha+q}}(u,X)\leq\delta$. Then, for each $\beta$-tuple $(T_\alpha^\beta)_{1\leq \alpha\leq \beta}$, we  put
\begin{equation}
\label{Edef}
E_{T^\beta}(u)=\{X\in P_1\, |\, T_\alpha(X)=T_\alpha^\beta \textrm{ for } 1\leq \alpha\leq \beta \}\, .
\end{equation}
A priori there are $2^\beta$ possible sets $E_{T^\beta}(u)$. However, by the above, $E_{T^\beta}(u)$ is empty whenever $T^\beta$ has more than
$$
Q:=(2q+1)\delta^{-1}\Lambda_2+q
$$
nonzero entries. Thus, we have constructed a decomposition of $P_1$ into at most $\beta^Q$ nonempty sets $E_{T^\beta}(u)$.

\subsection{Cone-splitting}

The goal of this subsection is to prove Corollary \ref{c:hmf_inductive} which says, roughly speaking, that at their good scales points line up in a tubular neighborhood of a well defined {\it almost planar} set. Here, the set of points that we call \emph{$\delta$-good at scales between $Ar$ and $r/A$} ($A>1$) is defined as
\begin{equation}
L_{Ar,r/A,\delta}(u)=\{X\in P_1: \cW_{Ar,A^{-1}r}(u,X)\leq\delta\} \, .
\end{equation}

A key role is played by the following cone-splitting principle and its quantitative version (Lemma \ref{l:hmf_cone_splitting}). Similar ideas played a key role in \cite{CheegerNaber_Ricci,CheegerNaber_HarmonicMinimal,CheegerHaslhoferNaber_MCF}.

\vskip2mm
\noindent
{\bf Cone-splitting principle.}
Assume that $\varphi$ is $j$-selfsimilar at $0$ with respect to $W$ and $0$-selfsimilar at $Y=(y,s)\notin W$. Then we have the following implications:
\begin{itemize}
\item If $W=V^j\times\{0\}$ and
\begin{itemize}
\item if $s=0$, then $y\notin V$ and $\varphi$ is $(j+1)$-selfsimilar at $0$ with respect to ${\rm span}\{y,V^j\}\times\{0\}$.
\item if $s\neq 0$ and $y\in V$, then $\varphi$ is $j$-selfsimilar at $0$ and quasistatic with respect to $V^j\times(-\infty, \max\{s,0\}]$.
\item if $s\neq 0$ and $y\notin V$, then $\varphi$ is $(j+1)$-selfsimilar and quasistatic with respect to ${\rm span}\{y,V^j\}\times(-\infty, \max\{s,0\}]$.
\end{itemize}
\end{itemize}
\begin{itemize}
\item If $W=V^j\times(-\infty,T]$ and
\begin{itemize}
\item if $y\in V$, then $s>T$ and $\varphi$ is $j$-selfsimilar at $0$ and quasistatic with respect to $V^j\times (-\infty,s]$.
\item  $y\notin V$, then $\varphi$ is $(j+1)$-selfsimilar and quasistatic with respect to ${\rm span}\{y,V^j\}\times(-\infty, \max\{s,T\}]$.
\end{itemize}
\end{itemize}
\begin{itemize}
\item If $W=V^{j-2}\times\dR$, then $y\notin V$ and $\varphi$ is $(j+1)$-selfsimilar and static with respect to ${\rm span}\{y,V^{j-2}\}\times\dR$.
\end{itemize}

From an argument by contradiction, we immediately obtain the following quantitative refinement.

\begin{lemma}[Cone-splitting lemma]
\label{l:hmf_cone_splitting}
For all $\varepsilon,\rho>0$, $R,\Lambda_1,\Lambda_2<\infty,m$ and $N$ there exists a constant
$\delta=\delta(\varepsilon,\rho,R,\Lambda_1,\Lambda_2,m,N)>0$ with the following property. If $u \in H^1_{\Lambda_1,\Lambda_2}(P_{4R},N)$ satisfies
\begin{enumerate}
 \item $u$ is $(\delta,R,j)$-selfsimilar at $0$ with respect to $W$.
 \item There exists $Y=(y,s)\in P_1\setminus T_{\rho}(W)$ such that $u$ is
$(\delta,2,0)$-selfsimilar at $Y$,
\end{enumerate}
then we have the following implications:
\begin{itemize}
\item If $W=V^j\times\{0\}$ and
\begin{itemize}
\item if $\abs{s}<\rho^2$, then $d(y,V)\geq\rho$ and $u$ is $(\varepsilon,1,j+1)$-selfsimilar at $0$ with respect to ${\rm span}\{y,V^j\}\times\{0\}$.
\item if $\abs{s}\geq\rho^2$ and $d(y,V)<\rho$, then $u$ is $(\varepsilon,1,j)$-selfsimilar at $0$ with respect to $V^j\times(-\infty, \max\{s,0\}]$.
\item if $\abs{s}\geq\rho^2$ and $d(y,V)\geq\rho$, then $u$ is $(\varepsilon,1,j+1)$-selfsimilar at $0$ with respect to ${\rm span}\{y,V^j\}\times(-\infty, \max\{s,0\}]$.
\end{itemize}
\end{itemize}
\begin{itemize}
\item If $W=V^j\times(-\infty,T]$ and
\begin{itemize}
\item if $d(y,V)<\rho$, then $s\geq T+\rho^2$ and $u$ is $(\varepsilon,1,j)$-selfsimilar at $0$ with respect to $V^j\times (-\infty,s]$.
\item if $d(y,V)\geq\rho$, then $u$ is $(\varepsilon,1,j+1)$-selfsimilar at $0$ with respect to ${\rm span}\{y,V^j\}\times(-\infty, \max\{s,T\}]$.
\end{itemize}
\end{itemize}
\begin{itemize}
\item If $W=V^{j-2}\times\dR$, then $d(y,V)\geq \rho$ and $u$ is $(\varepsilon,1,j+1)$-selfsimilar with respect to ${\rm span}\{y,V^{j-2}\}\times\dR$.
\end{itemize}
\end{lemma}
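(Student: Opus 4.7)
The plan is to argue by contradiction and compactness, reducing the quantitative statement to the non-quantitative cone-splitting principle stated just above. Suppose the lemma fails. Then for some fixed $\varepsilon,\rho,R,\Lambda_1,\Lambda_2,m,N$ there exists a sequence $\delta_\alpha\to 0$ and counterexamples $u_\alpha\in H^1_{\Lambda_1,\Lambda_2}(P_{4R},N)$, each of which is $(\delta_\alpha,R,j)$-selfsimilar at $0$ with respect to some $W_\alpha$ of one of the three admissible types, and each with a point $Y_\alpha=(y_\alpha,s_\alpha)\in P_1\setminus T_\rho(W_\alpha)$ at which $u_\alpha$ is $(\delta_\alpha,2,0)$-selfsimilar, yet for which the corresponding structural conclusion fails. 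Write $V_\alpha$ and, when relevant, the truncation time $T_\alpha$ for the linear-algebraic data determining $W_\alpha$.

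First, I would extract a limit. Using (\ref{dirichlet_bound}), (\ref{dirichlet2_bound}) together with the compactness of the target $N$, a subsequence of $u_\alpha$ converges weakly in $H^1_{\loc}$ and strongly in $L^2_{\loc}$ to some $u_\infty$. After further extraction, I may assume $V_\alpha\to V_\infty$ in the Grassmannian with $\dim V_\infty=\dim V_\alpha$, that $T_\alpha\to T_\infty\in[-\infty,\infty]$, that $Y_\alpha\to Y_\infty=(y_\infty,s_\infty)\in\overline{P_1}$, and that the $j$-selfsimilar and $0$-selfsimilar witness maps $\varphi_\alpha,\psi_\alpha$ (which inherit uniform local $H^1$ bounds, as they are $L^2$-close to the uniformly bounded rescalings $u_{\alpha,0,R}$ and $u_{\alpha,Y_\alpha,2}$) converge in $L^2_{\loc}$ to limits $\varphi_\infty,\psi_\infty$. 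Passing to the limit in the defining $L^2$ inequalities with $\delta_\alpha\to 0$, the map $u_\infty$ is \emph{genuinely} $j$-selfsimilar at $0$ with respect to $W_\infty$ and \emph{genuinely} $0$-selfsimilar at $Y_\infty$. The closed condition $Y_\alpha\notin T_\rho(W_\alpha)$ is preserved in the limit, yielding $Y_\infty\notin T_\rho(W_\infty)$; similarly the sub-case alternatives ($|s_\alpha|<\rho^2$ versus $|s_\alpha|\geq\rho^2$, and $d(y_\alpha,V_\alpha)<\rho$ versus $d(y_\alpha,V_\alpha)\geq\rho$) can be forced to be consistent along the subsequence.

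Next, I would apply the non-quantitative cone-splitting principle to $u_\infty$ in the appropriate case. This produces a genuine $(j+1)$- or $j$-selfsimilar (static or quasi-static) witness $\Phi_\infty$ for $u_\infty$ with exactly the $W$-structure asserted in the conclusion, built from ${\rm span}\{y_\infty,V_\infty\}$ together with $\max\{s_\infty,T_\infty,0\}$ as the case requires. By strong $L^2_{\loc}$ convergence $u_\alpha\to u_\infty$ and the corresponding approximability of $\Phi_\infty$ by witnesses built from $({\rm span}\{y_\alpha,V_\alpha\},\max\{s_\alpha,T_\alpha\})$, this forces $u_\alpha$ to be $(\varepsilon,1,\cdot)$-selfsimilar at $0$ with the required $W$-structure for all large $\alpha$, contradicting the choice of counterexamples.

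The hard part will be the bookkeeping across the three main cases and their several sub-cases: one must verify that each limiting geometric object (the augmented subspace ${\rm span}\{y_\infty,V_\infty\}$, the limiting truncation time, the static-versus-quasi-static distinction) is genuinely what the conclusion demands, and that the non-degeneracy of the splitting ($d(y,V)\geq\rho$, or $s\geq T+\rho^2$) is inherited from the $\rho$-separation hypothesis. All of this is routine once the compactness setup is in place, since the $\rho$-separation hypothesis rules out precisely the degenerate alternatives (such as $s=0$ with $y\in V$) under which the non-quantitative principle would fail to split.
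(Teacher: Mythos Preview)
Your proposal is correct and follows exactly the approach the paper indicates: the paper's entire proof is the single sentence ``From an argument by contradiction, we immediately obtain the following quantitative refinement,'' and your compactness-and-contradiction reduction to the non-quantitative cone-splitting principle is precisely what is meant. Your write-up is in fact considerably more detailed than the paper's, and your closing remark about the bookkeeping across sub-cases (and the role of the $\rho$-separation in ruling out degenerate limits) accurately identifies where the care is needed.
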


Using also Lemma \ref{t:hmf_monotone_rigidity}, by induction/contradiction we now obtain:

\begin{corollary}[Line-up in tubular neighborhoods]
\label{c:hmf_inductive}
For all $\mu,\nu>0$, $\Lambda_1,\Lambda_2<\infty,m$ and $N$ there exist
$\delta=\delta(\mu,\nu,\Lambda_1,\Lambda_2,m,N)>0$ and $A=A(\mu,\nu,\Lambda_1,\Lambda_2,m,N)<\infty$ such that the following holds:  If $u \in H^1_{\Lambda_1,\Lambda_2}(P_4,N)$ and $X\in L_{Ar,r/A,\delta}(u)$ for some $r\leq 1/A$, then there exists $0\leq \ell\leq m+2$ and $W_X^\ell$ such that
\begin{enumerate}
\item $u$ is $(\mu,r,\ell)$-selfsimilar at $X$ with respect to $W^\ell_{X}$\, ,
\item $L_{Ar,r/A,\delta}(u)\cap P_{r}(X)\subseteq T_{\nu r}(W^\ell_{X})$\, .
\end{enumerate}
\end{corollary}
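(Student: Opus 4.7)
The plan is to prove the corollary by a maximality argument, combining Lemma \ref{t:hmf_monotone_rigidity} (quantitative rigidity) with iterated applications of Lemma \ref{l:hmf_cone_splitting} (cone-splitting). After parabolic translation and rescaling we may assume $X=0$ and $r=1$, so the hypothesis reads $\cW_{A,1/A}(u,0)\leq \delta$.

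\textbf{Base case.} Choose $A$ large and $\delta$ small enough that Lemma \ref{t:hmf_monotone_rigidity}, applied on an appropriate sub-annulus of $P^-_1(0)\setminus P^-_{1/A}(0)$, produces some $\mu_0=\mu_0(\delta,A)$ (with $\mu_0\to 0$ as $\delta\to 0$ and $A\to\infty$) such that $u$ is $(\mu_0,1,0)$-selfsimilar at $0$ with respect to some $W^0$. Thus the set of pairs $(\ell,W)$ for which $u$ is $(\mu,1,\ell)$-selfsimilar at $0$ with respect to $W$ is nonempty provided $\mu_0\leq \mu$.

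\textbf{Iteration.} Let $\ell^*\leq m+2$ be the largest such $\ell$, and pick a corresponding $W=W^{\ell^*}$. Suppose for contradiction there exists $Y\in L_{A,1/A,\delta}(u)\cap P_1(0)$ with $Y\notin T_\nu(W)$. Applying Lemma \ref{t:hmf_monotone_rigidity} at $Y$ on a sub-annulus inside $P^-_A(Y)\setminus P^-_{1/A}(Y)$ shows $u$ is $(\delta_1,2,0)$-selfsimilar at $Y$ for some small $\delta_1$. Lemma \ref{l:hmf_cone_splitting} with $j=\ell^*$ and $\rho=\nu$ then either (i) yields a new $W'$ making $u$ be $(\mu,1,\ell^*+1)$-selfsimilar at $0$, contradicting the maximality of $\ell^*$, or (ii) in the quasi-static sub-case where $d(y,V)<\nu$, yields $W'=V^{\ell^*}\times(-\infty,s]$ with $s$ exceeding the previous time cutoff by at least $\nu^2$. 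Since $|s|\leq 1$ for $Y\in P_1$, alternative (ii) can recur at most $O(\nu^{-2})$ times before either (i) occurs or the resulting $W$ contains all of $P_1$ within $T_\nu(W)$. Choosing $\delta$ small enough to support this finite iteration (each step slightly degrades the selfsimilarity parameter) produces the desired $W^{\ell^*}_0$, and rescaling gives the corollary for general $X$ and $r$.

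\textbf{Main obstacle.} The delicate point is coordinating the nested constants: each cone-splitting application degrades the selfsimilarity tolerance and therefore demands a smaller input tolerance. Ensuring that $\mu_0$ and $\delta_1$ are small enough to absorb all losses during the $O(\nu^{-2})$-step iteration, together with tracking the three-fold case distinction (shrinking / quasi-static / static) and handling the anomalous quasi-static sub-case in which the spatial symmetry count does not increase, is the main source of technical complexity.
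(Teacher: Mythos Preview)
Your approach is the one the paper intends: the paper's entire proof is the single line ``Using also Lemma~\ref{t:hmf_monotone_rigidity}, by induction/contradiction we now obtain,'' and your scheme---quantitative rigidity at $X$ to start, quantitative rigidity at each offending $Y$, then iterated cone-splitting, with the quasi-static branch handled by observing that the time-cap $T$ advances by at least $\nu^2$ per step so that only $O(\nu^{-2})$ such steps can occur inside $P_1$---is exactly that induction/contradiction spelled out.

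There is, however, a genuine circularity in your maximality formulation. You define $\ell^*$ as the largest $\ell$ for which $u$ is $(\mu,1,\ell)$-selfsimilar at $0$, and then want Lemma~\ref{l:hmf_cone_splitting} to output $(\mu,1,\ell^*+1)$-selfsimilarity to contradict that maximality. But to obtain output tolerance $\mu$ from the lemma you must feed in a tolerance $\delta_{\mathrm{cs}}=\delta_{\mathrm{cs}}(\mu,\nu,\ldots)\ll\mu$ in hypothesis~(1), whereas at $0$ you only have the tolerance $\mu$ itself. So the step ``(i) yields a new $W'$ making $u$ be $(\mu,1,\ell^*+1)$-selfsimilar'' does not follow as written. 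Your ``Main obstacle'' paragraph shows you are aware of the degradation, but the maximality device at the fixed level $\mu$ does not absorb it.

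The standard repair is to abandon maximality at a single tolerance and instead fix in advance a finite decreasing chain $\mu_0>\mu_1>\cdots>\mu_K=\mu$, with $K$ bounded by $(m+2)+C\nu^{-2}$ (spatial-symmetry increases plus quasi-static time-cap advances), choosing each $\mu_k$ to be the $\delta$ that Lemma~\ref{l:hmf_cone_splitting} requires in order to output $\mu_{k+1}$. Then choose $\delta,A$ so that quantitative rigidity yields $(\mu_0,1,0)$-selfsimilarity at $0$ and $(\mu_0,2,0)$-selfsimilarity at each $Y\in L_{A,1/A,\delta}\cap P_1$, and run the iteration forward. With this bookkeeping your argument is complete and coincides with the paper's.
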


To deal with the quasistatic case we also need the following lemma.

\begin{lemma}[Quantitative behavior in the quasistatic case]
\label{l:hmf_quasistatic}
For all $\varepsilon,\gamma>0$, $\Lambda_1,\Lambda_2<\infty,m$ and $N$ there exists $\delta=\delta(\varepsilon,\gamma,\Lambda_1,\Lambda_2,m,N)>0$, such that the following holds: If $u\in H^1_{\Lambda_1,\Lambda_2}(P_4,N)$ is $(\delta,1,\ell)$-selfsimilar at $0$ with respect to $W=V^\ell\times(-\infty,T]$ and if $Y=(y,s)\in P_{1-2\gamma}$ with $s\leq T-(2\gamma)^2$ then $u$ is $(\varepsilon,\gamma,\ell+2)$-selfsimilar at $Y$ with respect to $W=(y+V^\ell)\times\dR$.
\end{lemma}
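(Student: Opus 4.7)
The plan is to argue by contradiction and compactness, in the spirit of Lemma \ref{t:hmf_monotone_rigidity} and the cone-splitting Lemma \ref{l:hmf_cone_splitting}. Suppose the conclusion fails. Then for some $\eps,\gamma>0$ one finds sequences $\delta_\alpha\to 0$, $u_\alpha\in H^1_{\Lambda_1,\Lambda_2}(P_4,N)$, $\ell$-selfsimilar quasi-static models $\varphi_\alpha$ with symmetries $V_\alpha^\ell\times(-\infty,T_\alpha]$ and $\int_{P_1}|u_\alpha-\varphi_\alpha|^2<\delta_\alpha$, together with points $Y_\alpha=(y_\alpha,s_\alpha)\in P_{1-2\gamma}$ satisfying $s_\alpha\leq T_\alpha-(2\gamma)^2$, such that $u_\alpha$ is not $(\eps,\gamma,\ell+2)$-selfsimilar at $Y_\alpha$ with respect to $(y_\alpha+V_\alpha^\ell)\times\dR$. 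The scale-invariant bounds (\ref{dirichlet_bound}) and (\ref{dirichlet2_bound}) give uniform $H^1_\loc$ control on $u_\alpha$ and $\varphi_\alpha$, so after extracting subsequences one has $u_\alpha\wto u_\infty$ and $\varphi_\alpha\wto\varphi_\infty$ weakly in $H^1_\loc$ and strongly in $L^2_\loc$, and $\|u_\alpha-\varphi_\alpha\|_{L^2(P_1)}\to 0$ forces $u_\infty=\varphi_\infty$. Passing to a further subsequence, $V_\alpha^\ell\to V_\infty^\ell$ in the Grassmannian, $T_\alpha\to T_\infty\in[0,\infty]$, and $Y_\alpha\to Y_\infty=(y_\infty,s_\infty)$ with $s_\infty\leq T_\infty-(2\gamma)^2$. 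Since the structural properties defining $\ell$-selfsimilar quasi-static maps are closed under weak $H^1_\loc$ convergence, $\varphi_\infty$ is itself $\ell$-selfsimilar quasi-static (or static, if $T_\infty=\infty$) with respect to $V_\infty^\ell\times(-\infty,T_\infty]$.

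Next I would extract the local static structure at $Y_\infty$. Since $s_\infty+\gamma^2\leq T_\infty-3\gamma^2<T_\infty$, the parabolic ball $P_\gamma(Y_\infty)$ is entirely contained in the static region $\{t\leq T_\infty\}$, so $\varphi_\infty|_{P_\gamma(Y_\infty)}$ is $t$-independent, equal to some profile $\psi_\infty(x)$. Applying backward selfsimilarity of $\varphi_\infty$ to $t<0$ and any $\lambda>0$ gives $\psi_\infty(\lambda x)=\psi_\infty(x)$, so $\psi_\infty$ is $0$-homogeneous, and it is also $V_\infty^\ell$-invariant. The candidate model is $\tilde\varphi(x,t):=\psi_\infty(x)$ on $\dR^m\times\dR$, which is static, backwardly selfsimilar, and $V_\infty^\ell$-invariant, hence $(\ell+2)$-selfsimilar, with intrinsic symmetry plane $V_\infty^\ell\times\dR$ corresponding to $(y_\infty+V_\infty^\ell)\times\dR$ after recentering at $Y_\infty$.

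The hard part will be to verify that the rescaled limit $(u_\infty)_{Y_\infty,\gamma}(x,t)=\psi_\infty(y_\infty+\gamma x)$ actually agrees in $L^2(P_1)$ with $\tilde\varphi(x)=\psi_\infty(x)$. The $V_\infty^\ell$-invariance of $\psi_\infty$ allows one to replace $y_\infty$ by its projection to $(V_\infty^\ell)^\perp$, and the $0$-homogeneity (applied with $\lambda=1/\gamma$) lets one rescale the remaining shift; combining the two symmetries to cancel the effect of $y_\infty$ on the $(m-\ell)$-dimensional quotient $\dR^m/V_\infty^\ell$ is the principal structural step, which proceeds exactly as in the Brakke-flow analog carried out in \cite{CheegerHaslhoferNaber_MCF}. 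Once $L^2$-closeness of the limits is in hand, strong $L^2_\loc$ convergence $u_\alpha\to u_\infty$ and continuity of the rescaling $u\mapsto u_{Y,\gamma}$ at the fixed scale $\gamma>0$ produce $\int_{P_1}|(u_\alpha)_{Y_\alpha,\gamma}-\tilde\varphi|^2<\eps$ for all large $\alpha$, contradicting the standing assumption that $u_\alpha$ is not $(\eps,\gamma,\ell+2)$-selfsimilar at $Y_\alpha$ with respect to $(y_\alpha+V_\alpha^\ell)\times\dR$.
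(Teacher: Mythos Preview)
Your overall approach---contradiction and compactness---matches the paper's one-sentence proof, and your first two paragraphs correctly extract the limit and observe that on $P_\gamma(Y_\infty)$ it equals the time-independent $\psi_\infty(y_\infty+\gamma x)$, with $\psi_\infty$ $0$-homogeneous and $V^\ell_\infty$-invariant.

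The error is in your third paragraph: the two symmetries of $\psi_\infty$ do \emph{not} combine to give $\psi_\infty(y_\infty+\gamma x)=\psi_\infty(x)$ in $L^2(P_1)$. Applying $V^\ell_\infty$-invariance and then $0$-homogeneity yields only
\[
\psi_\infty(y_\infty+\gamma x)\;=\;\psi_\infty(y_\infty^\perp+\gamma x)\;=\;\psi_\infty\bigl(x+y_\infty^\perp/\gamma\bigr),
\]
a translate of $\psi_\infty$ by $y_\infty^\perp/\gamma\in(V^\ell_\infty)^\perp$; there is no remaining symmetry available to absorb this shift, since it lies precisely in the directions along which $\psi_\infty$ has no translational invariance. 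When $y_\infty^\perp\neq 0$ your candidate model $\tilde\varphi=\psi_\infty$ is therefore wrong, and the rescaled limit is not $0$-homogeneous about the origin at all---for instance with $m=3$, $\ell=0$, $\psi(x)=x/|x|$ and $|y^\perp|=\gamma$, the map $x\mapsto (x+\hat y)/|x+\hat y|$ (with $|\hat y|=1$) varies nontrivially along each ray from the origin and hence is bounded away in $L^2(B_1)$ from every $0$-homogeneous comparison map. So no static backward-selfsimilar model fits, and the contradiction does not close as you describe. The paper's own proof simply writes ``a contradiction'' at this point without further argument, so you were right to isolate it as the nontrivial step, but the justification you sketch does not work.
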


\begin{proof}
If not, passing to limits we obtain $\varphi$ that is $\ell$-selfsimilar on $P_1$ with respect to $W=V^\ell\times (-\infty,T]$ and a point $Y=(y,s)\in \overline{P}_{1-2\gamma}$ with $s\leq T-(2\gamma)^2$ such that $\varphi$ is not $(\varepsilon,\gamma,\ell+2)$-selfsimilar at $Y$ with respect to $W=(y+V^\ell)\times\dR$, a contradiction.
\end{proof}

\subsection{Conclusion of the argument}\label{hmf_conclusion}
\begin{proof}[Proof of Theorem \ref{t:hmf_quant_strat}]
Let $\varepsilon,\eta,\Lambda_1,\Lambda_2,m,N$ be as in the statement of the theorem. It is convenient to choose $\gamma:=c_0(m)^{-\frac{2}{\varepsilon}}$, where $c_0(m)$ is a geometric constant that only depends on the dimension and will appear below (roughly a doubling constant). Now we apply Corollary \ref{c:hmf_inductive} with $\nu=\gamma/2$ and $\mu\leq\eta$ small enough such that also the below application of Lemma \ref{l:hmf_quasistatic} is justified, and get constants $\delta$ and $A$. Choose an integer $q$, such that $\gamma^q\leq 1/(2A)$. Setting $Q:=\lfloor (2q+1)\delta^{-1}\Lambda_2\rfloor+q$, from the argument in Section \ref{ss:energy_decomposition}, for any $u$ satisfying the assumptions of the theorem we get a decomposition of $P_1$ into at most $\beta^Q$ nonempty sets $E_{T^\beta}(u)$.

\begin{lemma}[Covering lemma]
\label{l:hmf_covering}
There exists $c_0(m)<\infty$ such that each set $\cS^j_{\eta,\gamma^{\beta}}(u)\cap E_{T^\beta}(u)$ can be covered by at most $c_0(c_0\gamma^{-(m+2)})^Q(c_0\gamma^{-j})^{\beta-Q}$ balls of radius $\gamma^\beta$.
\end{lemma}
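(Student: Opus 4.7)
The proof proceeds by scale induction. For each $\alpha = 0, 1, \ldots, \beta$ we maintain a covering $\cC_\alpha$ of $\cS^j_{\eta, \gamma^\beta}(u) \cap E_{T^\beta}(u)$ by parabolic balls of radius $\gamma^\alpha$ with centers in the stratum itself. Starting from an initial cover $\cC_0$ of $P_1$ by at most $c_0(m)$ balls of radius $1$, at step $\alpha$ we refine every $P_{\gamma^{\alpha-1}}(X) \in \cC_{\alpha-1}$ into a family of $\gamma^\alpha$-balls in one of two ways governed by the entry $T_\alpha^\beta$. If $T_\alpha^\beta = 1$ we perform a purely volumetric refinement using parabolic doubling: at most $c_0 \gamma^{-(m+2)}$ balls of radius $\gamma^\alpha$ suffice. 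This is the only option available for $\alpha \leq q$.

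If $T_\alpha^\beta = 0$ we invoke the cone-splitting Corollary \ref{c:hmf_inductive}. The choice $\gamma^q \leq 1/(2A)$ ensures that $(A\gamma^{\alpha-1}, \gamma^{\alpha-1}/A) \subset (\gamma^{\alpha-q}, \gamma^{\alpha+q})$, so the small annular Struwe energy at $X$ gives $X \in L_{A\gamma^{\alpha-1},\, \gamma^{\alpha-1}/A,\, \delta}(u)$. The corollary then produces a set $W^\ell_X$ such that $u$ is $(\mu, \gamma^{\alpha-1}, \ell)$-selfsimilar at $X$ with respect to $W^\ell_X$ and
$$\cS^j_{\eta, \gamma^\beta}(u) \cap P_{\gamma^{\alpha-1}}(X) \subseteq T_{\nu\gamma^{\alpha-1}}(W^\ell_X), \quad \nu = \gamma/2.$$
Since $X$ lies in the $j$-th stratum and the scale $\gamma^{\alpha-1} \geq \gamma^\beta$ is in the range tested by Definition \ref{d:hmf_singularstrata}, choosing $\mu \leq \eta$ at the outset rules out $(j+1)$-selfsimilarity at $X$ and forces $\ell \leq j$ in the shrinking and static cases. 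The $\nu\gamma^{\alpha-1}$-tube over such a $W^\ell_X$ inside $P_{\gamma^{\alpha-1}}(X)$ is then covered by at most $c_0 \gamma^{-j}$ parabolic balls of radius $\gamma^\alpha$, whose centers may be chosen to lie in the stratum itself.

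The quasi-static case is the one subtle point: $W^\ell_X = (x+V^\ell) \times (-\infty, t + \gamma^{2(\alpha-1)} T]$ has parabolic dimension $\ell + 2$, so a priori the refinement would cost $c_0 \gamma^{-(\ell+2)}$ balls, which is insufficient. We split the tube into the portion at parabolic time-distance $\geq (2\gamma \cdot \gamma^{\alpha-1})^2$ below the cutoff and the complementary thin slab of parabolic thickness $O(\gamma^\alpha)$ around the cutoff time. Lemma \ref{l:hmf_quasistatic} applied at any stratum point $Y$ in the first portion yields $(\varepsilon, \gamma\cdot\gamma^{\alpha-1}, \ell+2)$-selfsimilar static behavior at $Y$, which together with $Y \in \cS^j_{\eta, \gamma^\beta}(u)$ forces $\ell + 2 \leq j$ and hence parabolic dimension $\leq j$ on that portion. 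On the thin slab the time factor is already confined to parabolic diameter of order $\gamma^\alpha$, so only the spatial factor $V^\ell$ with $\ell \leq j$ needs to be resolved, and $c_0 \gamma^{-j}$ balls again suffice.

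Iterating, and using that $T^\beta$ has at most $Q$ nonzero entries together with $\gamma^{-(m+2)} \geq \gamma^{-j}$, we obtain
$$|\cC_\beta| \leq c_0 \cdot (c_0 \gamma^{-(m+2)})^Q \cdot (c_0 \gamma^{-j})^{\beta - Q},$$
which is the claimed bound. The principal obstacle is the two-case split in the quasi-static setting; it is precisely what Lemma \ref{l:hmf_quasistatic} is designed to handle, so that every good-scale refinement is governed by the stratum index $j$ rather than by the naive parabolic dimension of $W^\ell_X$.
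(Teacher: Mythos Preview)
Your proof is correct and follows essentially the same approach as the paper: recursive refinement scale by scale, with good scales ($T_\alpha=0$) invoking Corollary~\ref{c:hmf_inductive} to confine the stratum to a tube over $W^\ell_X$ with $\ell\leq j$ and hence cost $c_0\gamma^{-j}$ balls, bad scales costing the doubling bound $c_0\gamma^{-(m+2)}$, and the quasi-static case handled via Lemma~\ref{l:hmf_quasistatic} exactly as you do. Two minor imprecisions worth cleaning up: the containment you state should read $\cS^j_{\eta,\gamma^\beta}(u)\cap E_{T^\beta}(u)\cap P_{\gamma^{\alpha-1}}(X)\subseteq T_{\nu\gamma^{\alpha-1}}(W_X^\ell)$ (the Corollary only controls points in $L_{Ar,r/A,\delta}$, and it is $E_{T^\beta}\subset L$ that closes the loop), and your scale bookkeeping is off by one---with $\gamma<1/2$ the choice $\gamma^q\leq 1/(2A)$ does \emph{not} give $A\gamma^{\alpha-1}\leq\gamma^{\alpha-q}$; the paper applies the Corollary at $r=2\gamma^\beta$ while checking $T_\beta$ (not $T_{\beta+1}$), which is what makes the inclusion work.
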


\begin{proof}
We will recursively define a covering. For $\beta=0$ pick some minimal covering of $\cS^j_{\eta,\gamma^{0}}(u)$ by balls of radius $1$ with centers in  $\cS^j_{\eta,\gamma^{0}}(u)\cap P_1$. Note that $\cS^j_{\eta,\gamma^{\beta+1}}(u)\subset \cS^j_{\eta,\gamma^{\beta}}(u)$. Let $T^{\beta}$ be the $\beta$-tupel obtained from dropping the last entry from $T^{\beta+1}$. Then we also have $E_{T^{\beta+1}}(u)\subset E_{T^{\beta}}(u)$.

\vskip2mm
\noindent
{\bf Recursion step.}
For each ball $P_{\gamma^{\beta}}(X)$ in the covering of $\cS^j_{\eta,\gamma^{\beta}}(u)\cap E_{T^{\beta}}(u)$,
take a minimal covering of $P_{\gamma^{\beta}}(X)\cap \cS^j_{\eta,\gamma^{\beta+1}}(u)\cap  E_{T^{\beta+1}}(u)$
by balls of radius $\gamma^{\beta+1}$ with centers in
$P_{\gamma^{\beta}}(X)\cap \cS^j_{\eta,\gamma^{\beta+1}}(u)\cap  E_{T^{\beta+1}}(u)$.
\vskip2mm

Let us now explain that this covering has indeed the desired properties. First observe that, for all $\beta$, the number of balls in a minimal covering from the recursion step is at most
\begin{equation}
c(m)\gamma^{-(m+2)}\, .
\end{equation}
However, if $T_\beta(X)=0$, then $X\in L_{2A\gamma^{\beta},2\gamma^\beta/A,\delta}(u)$ and Corollary \ref{c:hmf_inductive} gives us $0\leq \ell\leq n+2$ and $W_X^\ell$ such that
\begin{enumerate}
\item $u$ is $(\mu,2\gamma^\beta,\ell)$-selfsimilar at $X$ with respect to $W^\ell_{X}$\, ,
\item $L_{2A\gamma^\beta,2\gamma^\beta/A,\delta}(u)\cap P_{2\gamma^\beta}(X)\subseteq T_{\gamma^{\beta+1}}(W^\ell_{X})$ .
\end{enumerate}
Since $X\in\cS^j_{\eta,\gamma^{\beta}}(u)$ we must have $\ell\leq j$. Since $E_{T^\beta}(u)\subset L_{2A\gamma^\beta,2\gamma^\beta/A,\delta}(u)$ this implies the following better estimate for the number of balls in a minimal covering:
\begin{equation}\label{hmf:bettervolume}
c(m)\gamma^{-j}\, .
\end{equation}
Indeed, the estimate is clear in the cases $W_X^\ell=(x+V^\ell)\times\{t\}$ and $W_X^\ell=(x+V^{\ell-2})\times\dR$, the case $W_X^\ell=(x+V^{\ell})\times(-\infty,T]$ requires some extra thought, but in fact only if $T\leq t+(2\gamma^\beta)^2$ and $\ell\geq j-1$ which we will assume now. So, if $Y=(y,s)\in P_{\gamma^{\beta}}(X)\cap \cS^j_{\eta,\gamma^{\beta+1}}(u)\cap  E_{T^{\beta+1}}(u)$, then by Lemma \ref{l:hmf_quasistatic} we conclude $s\geq T-(2\gamma^{\beta+1})^2$ and thus (\ref{hmf:bettervolume}) holds also in the quasistatic case.
By the quantitative differentiation argument, the better estimate (\ref{hmf:bettervolume}) applies with at most $Q$ exceptions. This proves the lemma.
\end{proof}

We will now conclude the proof of Theorem \ref{t:hmf_quant_strat} by estimating the volume of the covering. The volume of balls in $\dR^{m,1}$ satisfies
\begin{equation}
\label{e:harm_volume}
\Vol(P_{\gamma^\beta}(X))= w_m(\gamma^{\beta})^{m+2}\, ,
\end{equation}
which together with the choice of $\gamma$ and the fact that polynomials grow slower than exponentials, i.e. with
$$
c_0^\beta=(\gamma^\beta)^{-\frac{\varepsilon}{2}}\, ,
$$
$$
\beta^Q\leq c(m,Q)(\gamma^\beta)^{-\frac{\varepsilon}{2}}\, ,
$$
gives (recalling again the decomposition of $P_1$ into at most $\beta^Q$ nonempty sets $\cS^j_{\eta,\gamma^{\beta}}(u)\cap E_{T^\beta}(u)$ and the Covering Lemma \ref{l:hmf_covering})
\begin{equation}
\label{e:main}
\begin{aligned}
\Vol(\cS^j_{\eta,\gamma^\beta}(u))
&\leq \beta^Q \left[c_0  (c_0\gamma^{-(m+2)})^{Q}  (c_0\gamma^{-j})^{\beta-Q} \right] w_m(\gamma^\beta)^{m+2}\\
&\leq c(m,Q,\varepsilon)  \beta^Q c_0^{\beta}(\gamma^\beta)^{m+2-j}\\
&\leq c(m,Q,\varepsilon)(\gamma^\beta)^{m+2-j-\varepsilon}\, .
\end{aligned}
\end{equation}
From the above, for all $0<r<1$, we get
$$
 \begin{aligned}
 \Vol(\cS^j_{\eta,r}(u))
&\leq  c(m,Q,\varepsilon) (\gamma^{-1}r)^{m+2-j-\varepsilon}\\
& \leq c(\varepsilon,\eta,\Lambda_1,\Lambda_2,m,N) r^{m+2-j-\varepsilon}\, .
\end{aligned}
$$
It follows that
$$
\Vol\left(T_r(\cS^j_{\eta,r}(u))\right)\leq Cr^{m+2-j-\varepsilon}\qquad (0<r<1)\, .
$$
for another constant $C=C(\varepsilon,\eta,\Lambda_1,\Lambda_2,m,N)$, and this finishes the proof of Theorem \ref{t:hmf_quant_strat}.
\end{proof}

\section{Higher regularity for certain targets}\label{s:hmf_higher_regularity}

To finish the proofs of Theorem \ref{t:harmonic_map_flow_higher_regularity} and Corollary \ref{c:harmonic_map_flow_higher_regularity}, we prove the following $\varepsilon$-regulartiy lemma. Roughly speaking, it says that enough approximate degrees of symmetry imply regularity.

\begin{lemma}[$\varepsilon$-regularity]
\label{l:hmf_epsilon_regularity}
If $N$ does not admit harmonic two-spheres, then there exists an $\varepsilon=\varepsilon(m,N,\Lambda_1,\Lambda_2,R)>0$ such that the following holds.
Every Chen-Struwe solution $u\in H^1_{\Lambda_1,\Lambda_2}(P_{4R},N)$ that is $(\varepsilon,2r,m-1)$-selfsimilar at $0$ for some $r\leq R$, satisfies $r_u(0)\geq r$.

More generally, if $N$ does not admit harmonic $\ell$-spheres for $\ell=2,\ldots, k$ and quasi-harmonic  $\ell$-spheres for $\ell=3,\ldots, k-1$, then it sufficies to assume that u is  $(\varepsilon,r,m-k+1)$-selfsimilar.
\end{lemma}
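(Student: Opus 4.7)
The plan is a contradiction/blowup argument along the standard template for $\varepsilon$-regularity statements. Suppose the general form of the lemma fails: there exist $\varepsilon_\alpha\to 0$, radii $r_\alpha\leq R$, and Chen-Struwe solutions $u_\alpha\in H^1_{\Lambda_1,\Lambda_2}(P_{4R},N)$ each $(\varepsilon_\alpha,r_\alpha,m-k+1)$-selfsimilar at the origin but with $r_{u_\alpha}(0)<r_\alpha$. Rescale by $v_\alpha(x,t):=u_\alpha(r_\alpha x,r_\alpha^2 t)$ to obtain Chen-Struwe maps defined on parabolic balls that exhaust $\R^m\times\R$, with $r_{v_\alpha}(0)<1$ and with $(m-k+1)$-selfsimilar reference maps $\varphi_\alpha$ satisfying $\int_{P_1(0)}|v_\alpha-\varphi_\alpha|^2<\varepsilon_\alpha$.

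The first main step is to extract a limit. The scale-invariant bounds (\ref{dirichlet_bound}) and (\ref{dirichlet2_bound}) survive the rescaling, so along a subsequence $v_\alpha$ and $\varphi_\alpha$ converge weakly in $H^1_{\loc}$ and strongly in $L^2_{\loc}$ to a common limit $v_\infty$, which inherits backward self-similarity, the static/quasi-static/shrinking factorization, and at least $m-k+1$ space-time symmetries in the sense of Section~\ref{ss:quant_reg}. Counting effective variables, $v_\infty$ reduces to one of the following: a harmonic $\ell$-sphere $S^\ell\to N$ for some $2\leq \ell\leq k$ (static case, or quasi-static after translating past the truncation time), or a quasi-harmonic map $\R^{\ell'}\to N$ for some $1\leq\ell'\leq k-1$ (shrinking). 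The first option is forbidden by the harmonic-sphere hypothesis. Among the second, $3\leq\ell'\leq k-1$ is excluded directly by the quasi-harmonic-sphere hypothesis, while the borderline subcases $\ell'\in\{1,2\}$ must be excluded by further argument: a nontrivial quasi-harmonic $2$-sphere would produce a nontrivial harmonic $2$-sphere via bubbling as $t\uparrow 0$, and a nontrivial quasi-harmonic $1$-sphere is ruled out by analyzing the Gaussian-weighted ODE coming from (\ref{struwe_bound}). Hence $v_\infty$ is constant.

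The second main step is to upgrade weak to strong convergence and apply standard small-energy regularity. Since $N$ admits no harmonic $2$-spheres, \cite[Prop.~4.1]{LW1} promotes the weak $H^1_{\loc}$ convergence $v_\alpha\rightharpoonup v_\infty$ to strong convergence on, say, $P^{-}_{3/2}(0)$. With $v_\infty$ constant this forces $\int_{P_1(0)}|\nabla v_\alpha|^2\to 0$, whereupon the standard small-energy $\varepsilon$-regularity for the harmonic map flow, due to Chen-Struwe \cite{ChenStruwe_existence} via the monotonicity formula \cite{Struwe_monotonicity}, yields a uniform $C^2$ bound for $v_\alpha$ on a definite parabolic neighborhood of the origin; equivalently, $r_{v_\alpha}(0)\geq c_0>0$ for $\alpha$ large. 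This contradicts $r_{v_\alpha}(0)<1$ once the constant $c_0$ is absorbed into the initial selfsimilarity scale---precisely the reason the base case $k=2$ is formulated at scale $2r$ rather than $r$.

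The main obstacle I anticipate is the second step: correctly carrying out the classification of the limit $v_\infty$ so that the effective domain dimension in each symmetry regime matches the precise nonexistence hypothesis, and in particular handling the borderline shrinking cases $\ell'\in\{1,2\}$, where the assumption on $N$ applies only indirectly and one must invoke the blowup/bubbling analysis of Lin \cite{Lin_elliptic} and Lin-Wang \cite{LW1} to reduce to the harmonic $2$-sphere case.
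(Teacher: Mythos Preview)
Your approach is essentially the paper's: contradiction, rescale to unit scale, pass to a limit, show the limit is constant via the non-existence hypotheses on $N$, then use strong $H^1_{\loc}$-convergence from \cite[Prop.~4.1]{LW1} together with the classical small-energy $\varepsilon$-regularity theorem to contradict $r_{v_\alpha}(0)<1$.

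The only difference is organizational. The paper invokes \cite[Prop.~4.1]{LW1} at the outset rather than at the end, extracting from it not only strong convergence but also that the limit is itself a weak solution satisfying monotonicity and the standard $\varepsilon$-regularity, with singular set of vanishing $m$-dimensional parabolic Hausdorff measure. This partial regularity of the limit is then used (for the base case $k=2$) to pin it down directly as a cone $\psi(y/|y|)$ over a harmonic $S^2$, without your separate case analysis of the shrinking and quasi-static possibilities; for general $k$ the paper simply writes ``by induction''. Your explicit enumeration---in particular your treatment of the borderline quasi-harmonic cases $\ell'\in\{1,2\}$ via the Gaussian-weighted ODE and bubbling respectively---is more self-contained and makes transparent exactly why the hypotheses on $N$ begin at $\ell=2$ for harmonic spheres and $\ell=3$ for quasi-harmonic spheres, something the paper's terse ``by induction'' leaves implicit.
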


\begin{proof}
If not, there are Chen-Struwe solutions $u_\alpha:P_2\rightarrow N$ with bounded scale invariant energy that are $(\alpha^{-1},2,m-1)$-selfsimilar at $0$, but such that $r_{u_\alpha}(0)<1$.
By \cite[Prop. 4.1]{LW1} after passing to a subsequence we can find a strong limit $u_\alpha\to u$ in $H^1_{\textrm{loc}}(P_2,N)$. This limit is a $(m-1)$-selfsimilar weak solution of the harmonic map flow that satisfies the monotonicity formula and the standard $\eps$-regularity lemma, and whose singular set has vanishing $m$-dimensional parabolic Hausdorff measure, see again \cite[Prop. 4.1]{LW1}.
Thus $u$ must be of the form $u(x,t)=\psi(\frac{y}{\abs{y}})$, where we have written $x=(y,z)$ with $y\in \R^3$. Since by assumption the target does not admit harmonic two-spheres, $u$ must be constant. Then the standard $\eps$-regularity lemma \cite[Thm. 5.3]{Struwe_monotonicity} together with the strong convergence in $H^1_{\textrm{loc}}$ implies that $r_{u_\alpha}(0)>1$ for $\alpha$ large enough; a contradiction.
Arguing similarly, the more general statement follows by induction.
\end{proof}

\begin{proof}[Proof of Theorem \ref{t:harmonic_map_flow_higher_regularity}]
Since the initial map is smooth, the regularity scale is bounded below for small times.
Thus, by covering $M\times\dR_+$ with parabolic balls, we can reduce the problem to the local setting of Theorem \ref{t:hmf_quant_strat}.
After this reduction, Lemma \ref{l:hmf_epsilon_regularity} implies that $\cB_r(u)\subseteq \cS^{m-k}_{\eta,r}(u)$ for $\eta$ small enough, and the claim follows from the volume estimate of Theorem \ref{t:hmf_quant_strat}.
\end{proof}

\begin{proof}[Proof of Corollary \ref{c:harmonic_map_flow_higher_regularity}]
Using the layer-cake formula, this follows immediately from Theorem \ref{t:harmonic_map_flow_higher_regularity}, and standard interior estimates (Remark \ref{r:hmf_reg_scale}).
\end{proof}

\section{Examples without harmonic $2$-spheres}\label{s:examples}

We end by considering compact Riemannian manifolds $ N$ which admit no harmonic $2$-spheres.  It has become a common phenomena in the theory of harmonic maps that when the target space $N$ does not admit any harmonic $2$-spheres then the regularity of a harmonic mapping improves.  In particular, a {\it stationary} harmonic map has the same regularity properties as a minimizing harmonic map, see \cite{Lin_elliptic} and Theorem \ref{t:harmonic_map_flow_higher_regularity} and Corollary \ref{t:stationary_regularity}.  It is known from the work of Sacks-Uhlenbeck that in this case the universal cover of $N$ must be contractible. However in the literature at this point the only examples of such spaces $N$ have nonpositive sectional curvature.  In particular, for these examples other methods can 
be used to arrive at much stronger regularity results.  The goal of this Section is simply to record a few examples of compact manifolds without harmonic $2$-spheres which do not have nonpositive sectional curvature.

\begin{theorem}
Let $N$ be one of the following:
\begin{enumerate}
\item A $3$-dimensional infranil manifold equipped with a left invariant metric.

\item A  $3$-dimensional solv-manifold with a left invariant metric.
\end{enumerate}
Then $N$ admits no harmonic $2$-spheres.  In particular, products of such spaces with each other or with spaces of nonpositive sectional curvature also admit no harmonic $2$-spheres.
\end{theorem}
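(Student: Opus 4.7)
The unifying strategy is to lift a putative harmonic $2$-sphere to the universal cover and rule it out via a maximum principle applied to a convex function there. Let $u\colon S^2\to N$ be a smooth harmonic map. Since $S^2$ is simply connected and the universal covering $\pi\colon \tilde N\to N$ is a local isometry, $u$ lifts to a smooth harmonic map $\tilde u\colon S^2\to \tilde N$. In either case of the theorem, $\tilde N$ is a simply connected three-dimensional nilpotent or solvable Lie group $G$ equipped with the pulled-back left-invariant metric; $G$ is diffeomorphic to $\R^3$, with polynomial group law in the nilpotent case (the Baker--Campbell--Hausdorff series terminates) and with the semidirect structure $\R\ltimes\R^2$ for $\mathrm{Sol}$. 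Therefore it suffices to show that every smooth harmonic map $\tilde u\colon S^2\to G$ is constant.

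For that, the plan is to exhibit on $G$ a smooth strictly convex function $f\colon G\to\R$, meaning $\Hess f>0$ pointwise. Granting such an $f$, the composition formula for harmonic maps gives
\[
\Lap_{S^2}(f\circ \tilde u)=\Hess f\bigl(d\tilde u, d\tilde u\bigr)\geq 0,
\]
so $f\circ \tilde u$ is subharmonic on the compact surface $S^2$; by the strong maximum principle it is constant, hence $\Hess f(d\tilde u, d\tilde u)\equiv 0$, and strict positive definiteness of $\Hess f$ forces $d\tilde u\equiv 0$. Thus $\tilde u$, and hence $u$, is constant. The product statement follows by the same recipe: a harmonic map into a Riemannian product splits as $u=(u_1,\dots,u_k)$ with each $u_i$ harmonic, and each factor is constant either by the preceding argument or, for factors of nonpositive sectional curvature, by the classical Eells--Sampson Bochner identity on $S^2$ (which has nonnegative Ricci curvature).

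The main obstacle is the construction of $f$ on $\mathrm{Nil}^3$ and $\mathrm{Sol}^3$. For abelian $G=\R^3$ one simply takes $f(x)=|x|^2$, but in the two non-abelian cases Milnor showed that every left-invariant metric has sectional curvatures of \emph{mixed} sign, so the squared left-invariant distance to a point is not convex. Instead I would write $f$ by hand in the global coordinates supplied by the exponential map (nilpotent case) or the semidirect product structure (Sol case), as a Euclidean sum of squares plus polynomial or exponentially weighted correction terms tailored to the brackets $[X,Y]=Z$ of $\mathrm{Nil}$ or $[T,X]=-X,\ [T,Y]=Y$ of $\mathrm{Sol}$. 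Positive definiteness of the resulting $3\times 3$ Hessian matrix is then verified pointwise in an orthonormal left-invariant frame by computing the Christoffel symbols of the metric and tuning the finitely many coefficients of the correction terms so that the diagonal entries dominate the cross terms coming from the non-zero brackets. This is the delicate algebraic part of the proof; once $f$ is in hand, the rest is the soft argument above.
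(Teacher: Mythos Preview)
Your lifting step and the treatment of products agree with the paper, but the core argument on $\tilde N$ diverges from the paper's and has a genuine gap.

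The paper does not attempt to produce a strictly convex function on $\mathrm{Nil}^3$ or $\mathrm{Sol}^3$. Instead it uses that the image of a harmonic $2$-sphere is a branched minimal surface, and then rules out compact branched minimal surfaces in $\tilde N$ by exhibiting a foliation of $\tilde N$ by \emph{minimal hypersurfaces}: one checks directly from the bracket relations that a suitable codimension-one closed normal subgroup $\tilde N'\subset\tilde N$ (namely the one with Lie algebra $\mathrm{span}\{e_2,e_3\}$ in Heisenberg, respectively $\mathrm{span}\{e_1,e_2\}$ in Sol) has mean curvature zero, and then the left cosets $n\cdot\tilde N'$ foliate $\tilde N$ by minimal hypersurfaces. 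A compact branched minimal $S^2$ would be tangent from one side to some leaf, and the maximum principle for minimal surfaces gives a contradiction. This requires only a one-line mean-curvature computation in each case.

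Your route, by contrast, hinges entirely on the existence of a globally strictly convex function $f$ on $\mathrm{Nil}^3$ and $\mathrm{Sol}^3$, which you do not construct. You correctly note that Milnor's mixed-sign curvature rules out the naive candidates, and you propose to repair this with ``correction terms'' whose coefficients are to be ``tuned''; but no such $f$ is written down, and it is not at all clear that one exists. A quick check already shows the obvious candidates fail badly: for instance on $\mathrm{Sol}$ with metric $e^{2t}dx^2+e^{-2t}dy^2+dt^2$ one computes $\Hess(t)=e^{2t}dx^2-e^{-2t}dy^2$, so the linear coordinate is indefinite, and the Hessians of $x^2$, $y^2$, $t^2$ all acquire off-diagonal or wrong-sign terms from the Christoffel symbols that do not cancel under any finite linear combination in an obvious way. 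Absent an explicit $f$ with a verified positive-definite Hessian, the argument is incomplete. The paper's foliation approach sidesteps this difficulty entirely; it asks only for one minimal leaf, not for global convexity.
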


\begin{question}
Do there exist any nilmanifolds $N$ with left invariant metrics that admit harmonic $2$-spheres?  What can be said about the existence of harmonic $k$-spheres?
\end{question}

We are indebted to Harold Rosenberg for the following simple argument.

\begin{proof}
View $N$ is the quotient of its universal cover $\tilde N$ by a discrete cocompact group
$\Gamma$ acting on the left by isometries. 
Since $S^2$ is simply connected we can lift any mapping into $N$ to $\tilde N$,
Since the image of a harmonic $2$-sphere is a branched minimal surface, it suffices to show 
that $\tilde N$ contains no branched minimal $2$-spheres.
%Therefore there is no harm is just assuming $N$ is itself simply connected.  
Let us first deal with the case when $N$ is nilpotent.  In particular, unless $\tilde N$ is euclidean space, it
 is the Heisenberg group.  If $\eta$ is the lie algebra of $\tilde N$ equipped with an inner product, let $e_1,e_2,e_3$ be an orthonormal basis with $[\eta,\eta]=\text{span}\{e_3\}$.  A standard computation shows that the closed normal subgroup $\tilde N'\subseteq \tilde N$ associated to the subalgebra $\eta'\equiv\text{span}\{e_2,e_3\}$ is minimal.  The family of hypersurfaces $n\cdot \tilde N'$ with $n\in N$ gives a foliation of $N$ by minimal hypersurfaces which is preserved by the left action of $\Gamma$, with 
orbit space $\tilde N/\tilde N'=\bf R$. If $S^2\to \tilde N$ is 
a branched minimal sphere, then there exists $n\in N$ such that $n\cdot N'$ touches the $S^2$ from one side.  By a standard maximum principle argument, this is a contradiction unless the image of $S^2$ is a point.

If $N$ is a compact solv-manifold, then we can pick an orthonormal basis of the lie algebra $e_1,e_2,e_3$ such that $[e_1,e_2]=0$, $[e_1,e_3]=e_1$, $[e_2,e_3]=-e_2$.  If we let $\eta'=\text{span}\{e_1,e_2\}$ be a subalgebra then the brackets allow us to compute the mean curvature of the corresponding closed
normal subgroup as $\langle \nabla_{e_1}e_1,e_3\rangle+\nabla_{e_2}e_2,e_3\rangle = 1-1=0$.  Now one can argue as in the nilpotent case to finish the proof.
\end{proof}

\appendix

\bibliography{CHN_hmf}

\bibliographystyle{alpha}

\vspace{10mm}
{\sc Jeff Cheeger and Robert Haslhofer, Courant Institute of Mathematical Sciences, New York University, 251 Mercer Street, New York, NY 10012, USA}\\

{\sc Aaron Naber, Department of Mathematics, Massachusetts Institute of Technology, 77 Massachusetts Avenue, Cambridge, MA 02139, USA}\\

\emph{E-mail:} robert.haslhofer@cims.nyu.edu, cheeger@cims.nyu.edu, anaber@math.mit.edu

\end{document}